\newcommand{\sign}{\mathrm{sign}}
\newcommand{\disteq}{\stacksign{d}=}
\renewcommand{\epsilon}{\eps}
\newcommand{\vd}{\vec d}
\newcommand\MU{\vec\mu}
\newcommand\vX{\vec X}
\newcommand\vY{\vec Y}
\newcommand\vT{\vec T}
\newcommand\vm{\vec m}
\newcommand\einf{\vec \vartheta_{o,\mathrm{inf}}}
\newcommand\efin{\vec\vartheta_{o,\mathrm{fin}}}
\newcommand{\LDE}{\mathrm{LL}}
\newcommand\PHI{\vec\Phi}
\newcommand\dd{{\mathrm d}}
\numberwithin{equation}{section}
\renewcommand{\vec}[1]{\boldsymbol{#1}}
\newcommand\SIGMA{\vec\sigma}
\newtheorem{definition}{Definition}[section]
\newtheorem{remark}[definition]{Remark}
\newtheorem{theorem}[definition]{Theorem}
\newtheorem{lemma}[definition]{Lemma}
\newtheorem{proposition}[definition]{Proposition}
\newtheorem{corollary}[definition]{Corollary}
\newtheorem{fact}[definition]{Fact}
\newcommand\cE{\mathcal{E}}
\newcommand\cL{\mathcal{L}}
\newcommand\cP{\mathcal{P}}
\newcommand\cW{\mathcal{W}}
\def\cE{{\mathcal E}}
\newcommand\va{\vec a}
\newcommand\vs{\vec s}
\newcommand\eul{\mathrm{e}}
\newcommand\eps{\varepsilon}
\newcommand\NN{\mathbb{N}}
\newcommand\Erw{\mathbb{E}}
\newcommand{\vecone}{\vec{1}}
\newcommand{\Po}{{\rm Po}}
\newcommand{\BP}{\mathrm{BP}}
\newcommand{\DE}{\BP}
\newcommand\bc[1]{\left({#1}\right)}
\newcommand\cbc[1]{\left\{{#1}\right\}}
\newcommand\brk[1]{\left\lbrack{#1}\right\rbrack}
\newcommand\abs[1]{\left|{#1}\right|}
\newcommand\RR{\mathbb{R}}
\newcommand{\stacksign}[2]{{\stackrel{\mbox{\scriptsize #1}}{#2}}}
\newcommand{\Bollobas}{Bollob\'as}
\newcommand{\Chvatal}{Chv\'{a}tal}
\newcommand\pr{\mathbb{P}} 
\renewcommand\Pr{\pr}
\newcommand\Chap{Chapter}
\newcommand\RSA{Random Structures and Algorithms}
\newcommand\supp{\mathrm{supp}}
\renewcommand\ln{\log}
\tikzset{
  treenode/.style = {align=center, inner sep=0pt, text centered,
    font=\sffamily},
  arn_n/.style = {treenode, circle,thick,draw=black!75,
  			  fill=black!20,minimum size=5mm},% arbre rouge noir, noeud noir
  arn_r/.style = {treenode, rectangle,thick,draw=blue!75,fill=blue!20,minimum size=6mm},% arbre rouge noir, noeud rouge
  arn_x/.style = {treenode,thick,draw=blue!75,minimum size=5mm}% arbre rouge noir, nil
}
\title{Random $2$-SAT: The set of atoms of the limiting empirical marginal distribution}
\author[N. Müller \and R. Neininger \and H. Zhu]{Noela Müller, Ralph Neininger, Haodong Zhu}
\thanks{N. M. is supported by the NWO Gravitation project NETWORKS under grant no. 024.002.003. 
H. Z. is supported by the
NWO Gravitation project NETWORKS under grant no. 024.002.003 and the European
Union’s Horizon 2020 research and innovation programme under the Marie
Skłodowska-Curie grant agreement no. 945045. We also thank Amin Coja-Oghlan for insightful discussions and feedback on the topic, as well as and Rajat Subhra Hazra for an inspiring course on the spectra of random matrices during the NETWORKS training week in spring 2024.}
\address{Noela M\"uller, {\tt n.s.muller@tue.nl}, Eindhoven University of Technology, Department of Mathematics and Computer Science, MetaForum MF 4.084, 5600 MB Eindhoven, the Netherlands.}
\address{Ralph Neininger, {\tt neininger@math.uni-frankfurt.de}, Goethe University Frankfurt, Institute of Mathematics, 60629 Frankfurt a.M., Germany }
\address{Haodong Zhu, {\tt h.zhu1@tue.nl}, Eindhoven University of Technology, Department of Mathematics and Computer Science, MetaForum MF 4.084, 5600 MB Eindhoven, the Netherlands.}
\begin{document}

\begin{abstract}
We show that the set of atoms of the limiting empirical marginal distribution in the random $2$-SAT model is $\mathbb Q \cap (0,1)$, for all clause-to-variable densities up to the satisfiability threshold. While for densities up to $1/2$, the measure is purely discrete, we additionally establish the existence of a nontrivial continuous part for any density in $(1/2, 1)$. Our proof is based on the construction of a random variable with the correct distribution as the the root marginal of a multi-type Galton-Watson tree, along with a subsequent analysis of the resulting almost sure recursion. 
\end{abstract}
\maketitle

\section{Introduction and main result}
\subsection{Introduction} Random constraint satisfaction problems provide a flourishing research ground at the crossroads of different disciplines such as mathematics, physics and computer science. One of their key representatives is random $k$-SAT, a model well-known for its notorious analytic hardness. 

The most accessible member of the random $k$-SAT family is random $2$-SAT, which stands out through its early success history: In 1992, it was the first random constraint satisfaction problem whose satisfiability threshold was established rigorously \cite{CR, Goerdt}, an achievement that was followed by even finer analyses of its scaling window \cite{BBCKW, Dovgal}. In contrast, the existence and location of the satisfiability threshold of random $k$-SAT for $k \geq k_0$, where $k_0$ is a large absolute constant, have only been established comparatively recently in the breakthrough paper \cite{DSS3}, while a corresponding result for smaller values of $k$ is still not available. In a certain sense, the early success on random $2$-SAT reflects the fact that $2$-SAT is in P, while $k$-SAT for $k \geq 3$ is (presumably) not. 

Turning to counting problems, the distinction between $k=2$ and $k \geq 3$ through the lens of computational hardness is not as clearly cut: Determining the number of solutions is a \#P-complete task, even restricted to $2$-SAT. 
Indeed, the timeline of the subsequent results closely matches those considerations on complexity: 
After first results \cite{AM, MS}, the typical number of solutions of random $2$-SAT formulae throughout the satisfiable regime was established in 2021 \cite{2sat} (and later refined in \cite{2satclt}), roughly 40 years after the identification of the satisfiability threshold, and even after the seminal work of \cite{DSS3}.

A hallmark of the increased complexity can be found in the statements of the respective results: While the satisfiability threshold in random $2$-SAT occurs at the point where the number of clauses asymptotically exceeds the number of variables, the formula for the asymptotic number of solutions from \cite{2sat} is considerably more involved. Indeed, the proofs in the line of work \cite{AM, 2sat, 2satclt, DSS3, MS}, as well as many other articles, are based on deep ideas from statistical physics. As a consequence, the formula for the exponential growth rate of the number of solutions from \cite{2sat}, which confirms corresponding predictions from statistical physics \cite{MZ}, involves the evaluation of a functional at a probability measure $\pi_d$ on the unit interval, where $d\in(0,2)$ encodes the average number of clauses in which each variable appears. However, $\pi_d$ is only implicitly characterised by a stochastic recursion (the so-called \textit{density evolution}). 
What is more, 
the associated recursion is of a form that is not directly approachable by available results on stochastic recursions of an affine linear or max-type, for example. 

The main purpose of this note is to provide a first step towards a better understanding of the distribution $\pi_d$, which one may think of as the limit of the probability that a uniformly chosen variable within a uniformly random solution is set to `true'. As we will see, for some values of $d$, $\pi_d$ is mixed atomic and continuous. This diverse structure arises from an inhomogeneity among variable marginals which is absent in other random constraint satisfaction problems, such as random $k$-NAESAT or random graph $q$-coloring, where permutation symmetry ensures that all variable marginals are uniform. In random $k$-XORSAT and random linear equations, the algebraic structure ensures that the variable marginals are either uniform or Dirac measures \cite[Lemma 2.3]{jlin}. Even in random $k$-SAT for large values of $k$, the differences between variable marginals are less pronounced because of stronger concentration properties \cite{CMR, DSS3}. 

Simply speaking, the complex marginal structure in random $k$-SAT is owed to the fact that the variables are highly sensitive to imbalances in their local neighbourhood: A variable that appears in many more clauses non-negated than negated will tend to be set to `true' in a significant portion of the solutions, giving rise to an effect suggestively called \textit{populism} in \cite{yuval}. As a consequence of the neighborhood fluctuations between different variables, a rather intricate distribution over different marginals arises upon choosing a variable uniformly at random. 
The inhomogeneous marginal landscape in random $k$-SAT is also closely related to non-concentration effects that preclude an analysis via the very same techniques that have been successfully employed in the study of other, `symmetric' models such as random $k$-NAESAT (see e.g. the discussion in \cite[Sections 1.1 and 3.3]{nae} or \cite{CEJKK, CKM, CKPZ, SSZ}), random graph $q$-coloring (e.g. \cite{CKM}) or random systems of linear equations \cite{jlin}.

As random $2$-SAT is analytically far more accessible and better understood than random $k$-SAT for $k \geq 3$, a detailed study of its marginal structure in terms of the effect of neighbourhood fluctuations suggests itself. We hope that a more detailed understanding of $\pi_d$ and its properties will also shed light on the question what kind of marginal structures one can observe in random constraint satisfaction problems with neighbourhood fluctuations, and  are not aware of results of a similar kind for other models.

\subsection{Main result}  
For any integer $n>1$ and $d>0$, let $\PHI=\PHI_n$ be a random $2$-SAT formula on Boolean variables $x_1,\ldots,x_n$ with a Poisson number $\vm$ of clauses, where $\vm$ follows a Poisson distribution with parameter $dn/2$. Given $\vm$, each of the $\vm$ clauses is drawn independently and uniformly from the set of all $4\binom{n}{2}$ possible clauses with two distinct variables.
The chosen parametrisation ensures that on average, each variable appears in $d$ clauses. 

\Chvatal~and Reed \cite{CR} and Goerdt \cite{Goerdt} proved independently that the model exhibits a sharp threshold in satisfiability at $d_{\mathrm{sat}}=2$: For $d<2$, whp $\PHI$ is satisfiable, while it is not for $d>2$.  
To introduce the notion of a uniformly chosen solution, we denote by $S(\PHI)$ the set of all satisfying assignments of $\PHI$.
Assuming that $S(\PHI)\neq\emptyset$, let
\begin{align}\label{eqGibbs}
\mu_{\PHI}(\sigma)&=\vecone\cbc{\sigma\in S(\PHI)}/Z(\PHI), \qquad \sigma\in\{\pm1\}^{\{x_1,\ldots,x_n\}},
\end{align}
denote the uniform distribution on $S(\PHI)$, where $Z(\PHI) = |S(\PHI)|$. In the above, we encode `true' by $+1$ and `false' by $-1$.
In the following, samples from $\mu_{\PHI}$ are denoted by the boldface notation $\SIGMA$. As indicated earlier, an object of primal interest in the study of random $2$-SAT is the empirical marginal distribution of $\mu_{\PHI}$. In the course of deriving an expression for the asymptotic exponential growth rate of the number of solutions, the asymptotic behaviour of the empirical distribution of the random variables $\mu_{\PHI}(\SIGMA_{x_1}=1), \ldots, \mu_{\PHI}(\SIGMA_{x_n}=1)$ was determined in  \cite{2sat}:

\begin{proposition}[{\cite[Corollary 1.3]{2sat}}]\label{Cor_BP}
For any $0<d<2$ there exists a probability distribution $\pi_d$ on $[0,1]$ such that the random probability measure
\begin{align}\label{eqempirical}
\pi_{\PHI}&=\frac1n\sum_{i=1}^n\delta_{\mu_{\PHI}(\SIGMA_{x_i}=1)}
\end{align}
converges to $\pi_d$ weakly in probability\footnote{Specifically, for any continuous function $f:[0,1]\to\RR$, 
 $\lim_{n\to\infty}\Erw\abs{\int_0^1f(z)\dd\pi_d(z)-\int_0^1f(z)\dd\pi_{\PHI}(z)}=0$.}.
\end{proposition}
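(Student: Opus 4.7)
The plan is to combine local weak convergence of the factor graph of $\PHI$ with a correlation-decay estimate valid throughout the satisfiable phase, following the general template used in \cite{2sat}. Viewing $\PHI$ as a bipartite variable-clause factor graph rooted at a uniformly chosen variable, a standard Poissonisation argument shows that any fixed-depth neighbourhood converges in distribution to a multi-type Poisson Galton-Watson tree $\T_d$: each variable spawns a $\mathrm{Po}(d/2)$ number of clauses in which it appears positively and an independent $\mathrm{Po}(d/2)$ number in which it appears negatively, and each clause attaches one further variable that branches identically.

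I would then define $\pi_d$ as the law of the root marginal produced by the $2$-SAT belief propagation recursion on $\T_d$. Concretely, with uniform messages imposed at depth $\ell$, BP generates a random marginal $\hat\mu^{(\ell)}$ at the root; because $d<2$ lies strictly below the replica-symmetry breaking threshold, a fixed-point analysis of the density-evolution operator on probability measures over $[0,1]$ shows that the laws $\pi_d^{(\ell)}:=\mathrm{Law}(\hat\mu^{(\ell)})$ converge weakly to a unique limit $\pi_d$, independent of the choice of boundary condition.

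The key probabilistic step is the uniform correlation-decay bound
\[
\frac{1}{n}\sum_{i=1}^n \Erw\abs{\mu_{\PHI}(\SIGMA_{x_i}=1) - \hat\mu^{(\ell)}_i} \;\le\; \eps(\ell), \qquad \eps(\ell) \xrightarrow{\ell \to \infty} 0,
\]
where $\hat\mu^{(\ell)}_i$ is BP run on the depth-$\ell$ neighbourhood of $x_i$ in $\PHI$ with uniform boundary. Combined with local convergence this yields $\Erw \int f \, \dd\pi_{\PHI} \to \int f \, \dd\pi_d$ for every continuous $f:[0,1]\to\RR$, while a bounded-differences argument on the clause-exposure martingale (removing a single clause shifts $\int f\,\dd\pi_{\PHI}$ by $O(1/n)$ on average) supplies concentration around the expectation and hence convergence in probability.

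The main obstacle is the correlation-decay estimate uniformly throughout $0<d<2$: a direct BP-contraction argument breaks down well before the threshold. I would therefore follow the approach of \cite{2sat} and pass to the \emph{planted model}, where a satisfying assignment is drawn first and the formula is generated conditional on satisfying it. A conditional second-moment / small-subgraph-conditioning analysis yields non-reconstruction on the planted side, where the calculations remain tractable, and contiguity between the planted distribution and $\PHI$ conditioned on $S(\PHI)\neq\emptyset$ transfers the bound back to $\PHI$. This planted-to-random transfer is the technically delicate core of the proof; once in hand, the three preceding steps combine mechanically to give the claim.
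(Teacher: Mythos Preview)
The present paper does not prove this proposition at all; it is quoted verbatim as \cite[Corollary~1.3]{2sat} and used as input. The only description the paper gives of the proof is the one-line summary in Section~\ref{sec:proof_strat}: ``Its proof in \cite{2sat} relies on an exploitation of the local neighbourhood structure of such a uniformly chosen variable, in combination with a demonstration of the absence of long-range correlations.'' Your sketch is consistent with that summary and with the auxiliary facts the paper imports from \cite{2sat} (the multi-type Galton--Watson tree $\vT$ as local limit, the density-evolution operator $\DE_d$ and its log-likelihood counterpart $\LDE_d$, and the contraction Lemma~\ref{Lemma_BP1}), so at the level of strategy there is nothing to compare against and nothing to correct.

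One caveat on the concentration step: a naive bounded-differences/clause-exposure martingale does not obviously give an $O(1/n)$ Lipschitz bound for $\int f\,\dd\pi_{\PHI}$, since deleting a single clause can in principle change many marginals $\mu_{\PHI}(\SIGMA_{x_i}=1)$ by a non-negligible amount. In \cite{2sat} the concentration is obtained differently (via the planted coupling and the contraction of $\LDE_d$ rather than a clause-by-clause Lipschitz estimate), so if you intend to actually carry out this step you should not rely on the $O(1/n)$ heuristic without further justification.
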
  
The principal aim of this article is the study of $\pi_d$. More specifically, and omitting the dependence on $d$, let
\[\mathbb{A}_{\mathrm{p.p.}} := \{x \in [0,1]: \pi_d(\{x\})>0\} \] 
denote the pure point support of $\pi_d$. Indeed, the derivation of $\pi_d$ in \cite{2sat} implies that $\mathbb{A}_{\mathrm{p.p.}}$ is a subset of $(0,1)$: 
\begin{remark}[{\cite[Proposition 2.1]{2sat}}]\label{rem:boundary}
    For any $d \in (0,2)$, we have $\pi_d(\{0\}) = \pi_d(\{1\}) = 0$. Thus, $\mathbb{A}_{\mathrm{p.p.}}\subseteq (0,1)$.
\end{remark}

While our primary interest is the pure point support of $\pi_d$, we also determine the support of its continuous part below. Let $\nu$ be a measure on $[0,1]$,  
and let $B_\varepsilon(x)$ for $x\in\RR, \varepsilon>0$,  denote the interval $(x-\varepsilon,x+\varepsilon)$. We then set
\begin{align}\label{rep_supp}
\mathrm{supp}(\nu):=\{x\in[0,1]\,:\, \nu(B_\varepsilon(x)\cap [0,1])>0 \mbox{ for all } \varepsilon>0\}.
\end{align}

Our main result then precisely identifies the set of atoms of $\pi_d$ throughout the satisfiable regime, as well as the support of its continuous part:

\begin{theorem}\label{prop_atoms}
    For any $d \in (0,2)$, the pure point support of $\pi_d$ is
\begin{align}
\mathbb{A}_{\mathrm{p.p.}} = \mathbb Q \cap (0,1).
\end{align}
Moreover, for $d\in (0,1]$, $\pi_d$ is a discrete measure; for $d \in (1,2)$, $\pi_d$ has a non-trivial continuous part $\pi_{d,\mathrm{c}}$ with $\supp(\pi_{d,\mathrm{c}}) = [0,1]$.
\end{theorem}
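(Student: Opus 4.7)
The strategy is to realize the random variable with law $\pi_d$ as the belief-propagation (BP) value at the root of a multi-type Galton--Watson (MGW) tree capturing the local weak limit of $\PHI$, and then to analyse the resulting recursive distributional equation (RDE).

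Following the density evolution underlying \Prop~\ref{Cor_BP}, I set up an MGW tree $\vec T$ rooted at $o$ in which each vertex has an independent $\Po(d)$ number of incident clauses, each bearing two independent uniformly random signs and introducing one new vertex. With leaves initialised at the uniform marginal $1/2$, the BP recursion at an interior vertex $x$ reads
\[
\eta_x \;=\; \frac{\prod_{j\in N^-(x)} \xi_j}{\prod_{j\in N^-(x)} \xi_j + \prod_{j\in N^+(x)} \xi_j},
\]
where $N^\pm(x)$ collects the clauses in which $x$ appears with sign $\pm 1$ and each $\xi_j \in [0,1]$ is the cavity probability that the other literal in clause $j$ is true. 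Then $\eta_o$ has law $\pi_d$, and since the BP update is a rational function and all leaves carry $\xi = 1/2$, every finite-tree BP output lies in $\mathbb{Q}\cap(0,1)$ (strict containment in $(0,1)$ by \Rem~\ref{rem:boundary}).

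For the inclusion $\mathbb{Q}\cap(0,1) \subseteq \mathbb{A}_{\mathrm{p.p.}}$, I induct on the denominator of $q = a/b$ in lowest terms: for $q=1/2$ the empty tree works, and for $q > 1/2$ attaching a single $+$-clause at the root (root positive) whose other endpoint is itself the root of a finite subtree realising $\xi = (b-a)/a$ (a rational of strictly smaller denominator by coprimality) yields $\eta_o = 1/(1+\xi) = a/b$; the case $q<1/2$ is symmetric. Because any prescribed finite rooted shape occurs in $\vec T$ with strictly positive probability (Poisson mass at every integer, i.i.d.\ uniform signs), this gives $\pi_d(\{q\}) > 0$ for every $q \in \mathbb{Q} \cap (0,1)$. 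For $d\in(0,1]$ the MGW tree has mean offspring $d \le 1$ and is a.s.\ finite, so $\eta_o \in \mathbb{Q}\cap(0,1)$ a.s., yielding that $\pi_d$ is purely discrete with atom set exactly $\mathbb{Q}\cap(0,1)$.

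For $d\in(1,2)$, the reverse inclusion $\mathbb{A}_{\mathrm{p.p.}} \subseteq \mathbb{Q}\cap(0,1)$ is the technical heart of the proof and what I expect to be the main obstacle: on the positive-probability survival event, one must exclude irrational atoms produced by infinite subtrees. Exploiting the RDE $\eta_o \stackrel{d}{=} F(\vec \xi)$, if $\pi_d(\{y\})>0$ then for some offspring count $k$ and sign pattern the event $\{F(\vec\xi) = y\}$ carries positive product mass; since this event lies on a proper real-analytic subvariety of $[0,1]^k$, independence of the $\xi_j$ forces each to take values in the atomic part of $\pi_d$ a.s.\ on that event. Propagating this atomicity up the tree and combining with a minimality/truncation argument shows that every atom is realised by a finite-depth BP computation and is hence rational. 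Once irrational atoms are ruled out, the continuous component $\pi_{d,\mathrm{c}}$ is automatically non-trivial, and to prove $\supp(\pi_{d,\mathrm{c}}) = [0,1]$ I show that for any $y\in[0,1]$ and $\varepsilon>0$ there is a positive-probability MGW configuration (with many children whose cavity values are tuned using already-available continuous mass) driving $\eta_o$ into $(y-\varepsilon, y+\varepsilon)$, a positive fraction of which lies outside $\mathbb{Q}$.
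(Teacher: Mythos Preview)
Your setup via the multi-type Galton--Watson tree and the BP recursion matches the paper's framework; note that the paper first makes the almost-sure construction precise (its Proposition~\ref{Prop_asconvergence}), which you would also need so that $\eta_o$ is well-defined on the survival event and so that the recursion holds pointwise rather than merely in law---without this, ``propagating atomicity up the tree'' is not a meaningful operation. Your induction on the denominator for the inclusion $\mathbb Q\cap(0,1)\subseteq \mathbb A_{\mathrm{p.p.}}$ is a clean variant of the paper's nested induction and works.

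The genuine gap is in the reverse inclusion for $d\in(1,2)$. Your analytic-subvariety step (really, strict monotonicity of $F$ in each coordinate suffices, and is what you need since the continuous part of $\pi_d$ is not known to be absolutely continuous) correctly shows that on $\{F(\vec\xi)=y\}$ each $\xi_j$ lands in the atomic part of $\pi_d$. But ``propagating this atomicity up the tree and combining with a minimality/truncation argument'' does not yield that every atom is realised by a finite-depth BP computation: propagation only tells you that on $\{\MU_o=y\}$ every vertex value is \emph{some} atom, not that the tree is finite or that those atoms are rational. An irrational atom could in principle sit atop an infinite tree whose vertex values range over countably many irrational atoms, and nothing in your sketch excludes this---there is no well-founded ``minimal'' irrational atom on which to anchor a descent, and truncation merely exhibits $y$ as a limit of rationals, which every real number is. The paper sidesteps this entirely: working with the log-likelihood ratio $\vec\vartheta_o$, it splits $\vec\vartheta_o=\efin+\einf$ into contributions from the finite and the infinite subtrees of the root and shows that, conditionally on survival, the maximal point mass $\vartheta^\ast$ of $\vec\vartheta_o$ satisfies $\vartheta^\ast\le\vartheta^\ast/2$, hence $\vartheta^\ast=0$. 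The halving comes from isolating one infinite subtree $i_\ast$ and observing that its additive contribution $-\vs_{i_\ast}\log\psi(\vs_{i_\ast}'\vec\vartheta_{o_{i_\ast}})$ has sign equal to the independent Rademacher $\vs_{i_\ast}$ (since $\log\psi<0$), so hitting any prescribed $z\neq 0$ already costs a factor $1/2$ before one even asks $\vec\vartheta_{o_{i_\ast}}$ to take a specific value. This smoothing via the random clause-sign is the idea your proposal is missing; once the conditional law on $\{|\vT|=\infty\}$ is atomless, both $\mathbb A_{\mathrm{p.p.}}\subseteq\mathbb Q\cap(0,1)$ and the non-triviality of $\pi_{d,\mathrm{c}}$ are immediate, and the full-support claim follows by combining one infinite and one finite subtree, close to what you outline.
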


Looking at finite satisfiable $2$-SAT formulas, the rational numbers in $(0,1)$ constitute a natural candidate set for the marginals of single variables, as these are defined as \textit{proportions} of solutions where the given variable is set to `true' or `false'. However, it might be less immediate that, irrespective of $d$, the pure point support of $\pi_d$ contains \textit{all} rational numbers in $(0,1)$, and that a non-trivial continuous part $\pi_{d,\mathrm{c}}$ exists for $d \in (1,2)$. As our proof shows, non-triviality of $\pi_{d,\mathrm{c}}$ is clearly linked to the approximation of the local neighbourhood structure of a uniformly chosen variable by a supercritical branching process.

\subsection{Proof strategy}\label{sec:proof_strat}
Theorem \ref{prop_atoms} splits naturally into the following two inclusions:

\begin{proposition}\label{lem_atoms1}
    For any $d \in (0,2)$, we have
\begin{align}
\mathbb{A}_{\mathrm{p.p.}} \supseteq \mathbb Q \cap (0,1).
\end{align}
\end{proposition}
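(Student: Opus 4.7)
The plan is to realise each $r\in\mathbb{Q}\cap(0,1)$ as the root marginal of a small tree-shaped 2-SAT subformula $F_r$ that, with probability bounded away from $0$ as $n\to\infty$, occurs as a full connected component of $\PHI$. Since the number of satisfying assignments factorises over connected components, any variable $v$ sitting at the root of an isolated copy of $F_r$ has $\mu_\PHI$-marginal exactly $r$, and hence $\Erw[\pi_\PHI(\{r\})]$ stays bounded below by a positive constant. On the other hand, \Prop~\ref{Cor_BP}, combined with sandwiching $\vecone_{\{r\}}$ between continuous tent functions supported on shrinking neighbourhoods of $r$, implies that $\pi_d(\{r\})=0$ would force $\Erw[\pi_\PHI(\{r\})]\to 0$. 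The resulting contradiction yields $\pi_d(\{r\})>0$, i.e.\ $r\in\mathbb{A}_{\mathrm{p.p.}}$.

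To construct $F_r$ for $r=p/q$ with $\gcd(p,q)=1$ and $0<p<q$, I work with unnormalised counts. For a rooted tree-shaped 2-SAT formula with designated root $v$, let $N_+,N_-$ denote the numbers of satisfying assignments with $v=+1,-1$ respectively, so the marginal at $v$ equals $N_+/(N_++N_-)$. The singleton root gives $(N_+,N_-)=(1,1)$. Attaching a single clause between $v$ and the root $w$ of a subtree with counts $(A,B)$ produces, depending on the four possible literal-sign combinations at $v$ and $w$,
\[
(A+B,\,A),\qquad (A+B,\,B),\qquad (A,\,A+B),\qquad (B,\,A+B),
\]
while several clauses at $v$ multiply the contributions componentwise. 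Read in reverse, the four single-clause operations implement one step of the subtractive Euclidean algorithm on $(X,Y)$ (possibly preceded by a coordinate swap); since this algorithm reduces any coprime pair of positive integers to $(1,1)$ in finitely many steps, the target pair $(p,q-p)$ is realisable as $(N_+,N_-)$ of some finite tree-shaped gadget $F_r$, whose root then has marginal $p/q=r$.

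The remaining step is to verify via a Poisson local-limit computation that a positive fraction of variables of $\PHI$ sits at the root of an isolated copy of $F_r$. Writing $m$ for the number of vertices of $F_r$ (which then has $m-1$ clauses), for a fixed $v\in[n]$ and a fixed labelling of the other $m-1$ vertices by distinct elements of $[n]\setminus\{v\}$, the probability that exactly the required $m-1$ clauses are present is $\Theta(n^{-(m-1)})$, while the probability that no further clause of $\PHI$ involves any of the $m$ gadget vertices converges to $\exp(-md)>0$. Summing over the $\Theta(n^{m-1})$ available labellings, the expected number of isolated copies of $F_r$ rooted at a fixed $v$ converges to a positive constant; a further summation over $v$ together with routine second-moment concentration give $\pi_\PHI(\{r\})\ge c_r$ w.h.p.\ for some $c_r>0$, completing the argument. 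The main conceptual step is the gadget construction in the second paragraph: the identification of the four elementary single-clause operations with the subtractive Euclidean algorithm is what makes every coprime pair reachable, and everything else reduces to a standard Poisson local-limit calculation.
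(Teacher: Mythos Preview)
Your argument is correct and takes a genuinely different route from the paper. Both proofs share the same first step---realising every rational $r\in(0,1)$ as the root marginal of some finite tree-shaped 2-SAT formula---but your Euclidean-algorithm construction on the unnormalised counts $(N_+,N_-)$ is different from (and arguably slicker than) the paper's nested double induction based on the two closure rules $q\mapsto 1-q$ and $(p,q)\mapsto p/(p+q)$. The substantive divergence is in the second step. The paper invests in an almost-sure construction (its Proposition~\ref{Prop_asconvergence}): it builds a random variable $\MU_o\sim\pi_d$ as the limiting root marginal of the Galton--Watson tree $\vT$, and then simply reads off $\pi_d(\{q\})\ge\pr(\vT=T)>0$ for any finite gadget $T$ with root marginal $q$. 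You instead stay at the level of the finite formula $\PHI$: a Poissonisation/second-moment computation shows that a positive fraction of variables sit at the root of an isolated copy of $F_r$, and the tent-function squeeze against \Prop~\ref{Cor_BP} converts $\liminf_n\Erw[\pi_\PHI(\{r\})]>0$ into $\pi_d(\{r\})>0$. Your approach buys self-containment for this proposition---it uses only the cited weak-convergence result and nothing new---whereas the paper's approach reuses the almost-sure machinery that it needs anyway for the continuous-part statement (Proposition~\ref{lem_atoms2}). One small wording quibble: for a fixed $v$ the ``expected number of isolated copies of $F_r$ rooted at $v$'' is really a probability (there is at most one such component), so what you are computing is directly $\pr(v\text{ roots an isolated }F_r)\to c_r>0$; the second-moment step is then indeed routine.
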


\begin{proposition}\label{lem_atoms2}
    For any $d \in (0,2)$, we have
\begin{align} \label{incl:atoms2}
\mathbb{A}_{\mathrm{p.p.}} \subseteq \mathbb Q \cap (0,1).
\end{align}
Moreover, for $d\in (0,1]$, $\pi_d$ is a discrete measure; for $d \in (1,2)$, $\pi_d$ has a non-trivial continuous part $\pi_{d,\mathrm{c}}$ with $\supp(\pi_{d,\mathrm{c}}) = [0,1]$.
\end{proposition}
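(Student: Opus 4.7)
I would exploit the representation of $\pi_d$, developed earlier, as the distribution of a root marginal $X$ on a multi-type Galton--Watson tree, satisfying the belief-propagation recursion
\[
X \;=\; \Psi_{\vec\xi}(Z_1, \ldots, Z_K) \;:=\; \frac{\prod_{j:\xi_j=-1} Z_j}{\prod_{j:\xi_j=-1} Z_j + \prod_{j:\xi_j=+1} Z_j},
\]
with $K \sim \mathrm{Po}(d)$, signs $\xi_j$ iid uniform on $\{\pm 1\}$, and $(Z_j)$ iid child messages distributed according to a simple transform of $\pi_d$.

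\emph{Atomic-mass fixed-point equation.} Since each $\Psi_{\vec\xi}$ is a non-constant rational function, the level sets $\{\Psi_{\vec\xi} = c\}$ in $\RR^K$ are real-analytic hypersurfaces of Lebesgue measure zero. Hence only the fully atomic configurations of $(Z_1, \ldots, Z_K)$ contribute to the probability that $X$ sits at an atom, yielding
\[
\beta \;:=\; \pi_d(\mathbb{A}_{\mathrm{p.p.}}) \;=\; \sum_{k \ge 0} e^{-d}\frac{d^k}{k!}\,\beta^k \;=\; e^{-d(1-\beta)} \;=:\; g(\beta).
\]
For $d \le 1$ the only solution of $g(y)=y$ in $[0,1]$ is $y = 1$, so $\pi_d$ is purely atomic; moreover the (sub)critical Galton--Watson tree is a.s.~finite, so $X$ is a.s.~the marginal of a finite 2-SAT tree formula---a ratio of integer solution counts---hence rational. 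Together with Remark \ref{rem:boundary} this gives $\pi_d$ discrete and supported on $\mathbb Q \cap (0,1)$.

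\emph{The regime $d \in (1,2)$.} Here $g$ admits a second fixed point $y_* \in (0,1)$. The plan is to argue $\beta = y_*$ (rather than $\beta = 1$) by establishing that with positive probability $X$ is not atomic: on the survival event of the supercritical Galton--Watson tree, continuous randomness injected at arbitrarily deep levels propagates through the recursion to the root, ruling out a.s.~concentration on the countable atom set. This delivers a nontrivial continuous part $\pi_{d,\mathrm c}$ of mass $1-y_* > 0$. For $\mathbb{A}_{\mathrm{p.p.}} \subseteq \mathbb Q \cap (0,1)$, set $R := \sum_{q \in \mathbb Q \cap (0,1)} \pi_d(\{q\})$. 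Since $\Psi_{\vec\xi}$ maps rational-atom tuples (which by Remark \ref{rem:boundary} lie in $(0,1)$) to outputs in $\mathbb Q \cap (0,1)$, and every rational in $(0,1)$ is an atom of $\pi_d$ by Proposition \ref{lem_atoms1}, the same hypersurface analysis gives $R \ge g(R)$. Combined with $R \le \beta = y_*$ and the fact that $y_*$ is the unique fixed point of $g$ strictly below $1$, this pins $R = \beta = y_*$, so the irrational atomic mass $\beta-R$ vanishes. For $\supp(\pi_{d,\mathrm c}) = [0,1]$: given $x_0 \in [0,1]$ and $\eps > 0$, construct a finite tree gadget occurring with positive probability that pushes $X$ into $B_\eps(x_0)$ by rational approximation (via Proposition \ref{lem_atoms1}) while inheriting enough continuous randomness from a deep subtree to inject a continuous density there, yielding $\pi_{d,\mathrm c}(B_\eps(x_0)) > 0$.

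\emph{Main obstacle.} The delicate step is establishing $\beta = y_*$ rather than $\beta = 1$ in the supercritical regime, i.e.\ ruling out that all of $\pi_d$'s mass accumulates on its countably many rational atoms. This requires a quantitative construction of a positive-probability event on which $X$ is non-atomic, entailing careful tracking of how continuous randomness at deep levels survives the multiplicative, sign-coupled BP recursion up to the root. Standard techniques for affine-linear or max-type stochastic recursions do not directly apply due to the product structure induced by the random signs $\xi_j$.
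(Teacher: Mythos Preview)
Your fixed-point framework $\beta = g(\beta) = e^{-d(1-\beta)}$ is a genuinely different route from the paper's and the reduction is sound in outline, but two points need attention. First, a minor one: the ``hypersurface of Lebesgue measure zero'' justification is not the right mechanism, since nothing tells you the continuous part of $\pi_d$ is absolutely continuous. What actually makes $\beta = g(\beta)$ hold is that $z \mapsto \Psi_{\vec\xi}(Z_1,\dots,z,\dots,Z_K)$ is a.s.\ strictly monotone in each coordinate on $(0,1)$, so a single child drawn from the continuous part kills every point mass of the output regardless of singularity. With that fix, the equation and your subsequent $R \ge g(R)$ argument (forcing $R = \beta$ once $\beta = y_\ast$) are correct.

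The genuine gap is the step you yourself flag as the main obstacle: ruling out $\beta = 1$ for $d \in (1,2)$. Your proposed mechanism, ``continuous randomness injected at arbitrarily deep levels propagates to the root'', is circular. At every finite depth the truncated tree is finite and every message is rational; there is no continuous randomness anywhere in the system at any finite stage, so there is nothing to propagate. Continuity can only appear in the $\ell \to \infty$ limit, which is exactly the statement to be proved. The paper resolves this by a short smoothing argument that does not bootstrap from any pre-existing continuity: on $\{|\vT|=\infty\}$ it isolates one infinite subtree $\vT_{i_\ast}$ and uses that the sign $\vs_{i_\ast}$ on its edge to the root is uniform on $\{\pm 1\}$ and independent of everything else. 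The contribution of that subtree to the log-likelihood $\vec\vartheta_o$ is $-\vs_{i_\ast}\log\psi(\vs_{i_\ast}'\vec\vartheta_{o_{i_\ast}})$, which is a.s.\ nonzero with sign equal to $\vs_{i_\ast}$; hence for any fixed $z\neq 0$ at most one value of $\vs_{i_\ast}$ can hit it, contributing a factor $\tfrac12$. A convolution bound against the remaining independent subtree contributions then yields $\vartheta^\ast \le \vartheta^\ast/2$ for $\vartheta^\ast := \sup_z \pr(\vec\vartheta_o = z \mid |\vT|=\infty)$, so $\vartheta^\ast = 0$. This single inequality simultaneously delivers $\beta = y_\ast$, identifies the atomic part with the extinction event (whence all atoms are rational without any separate $R$-argument), and feeds directly into the support computation for $\pi_{d,\mathrm{c}}$. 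Your fixed-point reduction is compatible with this, but some version of the sign-halving step is what is actually needed to close it.
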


The proofs of both Propositions~\ref{lem_atoms1} and~\ref{lem_atoms2} rely on the explicit construction of a random variable $\MU_o$ with distribution $\pi_d$, which is described in Section~\ref{sec-as-conv-root-marignal}. To motivate the construction of $\MU_o$,  
recall that Proposition~\ref{Cor_BP} shows that the probability measure $\pi_d$ approximates the marginal distribution of a uniformly chosen variable in a random large formula $\PHI$. Its proof in \cite{2sat} relies on an exploitation of the local neighbourhood structure of such a uniformly chosen variable, in combination with a demonstration of the absence of long-range correlations.
To formally speak about variable neighbourhoods, we represent a given $2$-SAT formula $\Phi$ by its \textit{factor graph} $G(\Phi)$, which can be regarded as a bipartite graph with edge-labels in $\{\pm 1\}$. In the graphical representation $G(\Phi)$, one vertex set $V_n=\{x_1,\ldots,x_n\}$ encodes the propositional variables of $\Phi$, while the other vertex set $F_{m}=\{a_1,\ldots,a_{m}\}$ encodes its clauses. An edge between a variable and a clause vertex is present if and only if the corresponding variable participates in the corresponding clause. Finally, encoding variable negations by $-1$ and non-negations by $+1$, each edge is labelled to indicate the (non-)negation of the incident variable in the incident clause.

Conveniently, in a random formula $\PHI$, the local neighbourhood structure of a uniformly chosen variable can be approximated by that of a multi-type Galton-Watson tree $\vT$ (for the precise definition of $\vT$, see Section~\ref{sec-as-conv-root-marignal}). Therefore, the probability measure $\pi_d$ is defined as the distribution of the root marginal of the random $2$-SAT formula encoded by such a tree in \cite{2sat}. While for the purposes of \cite{2sat}, an ensuing \textit{distributional} characterisation of $\pi_d$ was sufficient, in Proposition~\ref{Prop_asconvergence} below, we show that the sequence of root marginals of the truncated multi-type Galton-Watson tree converges  \textit{almost surely} to a random variable $\MU_o$ with distribution $\pi_d$. This establishes a natural link between $\pi_d$ and the structure of $\vT$, along with an almost sure decomposition of $\MU_o$ that we can capitalize upon.

As a consequence of the construction in Proposition~\ref{Prop_asconvergence}, the proofs of both Propositions~\ref{lem_atoms1} and~\ref{lem_atoms2} become simple and gain a clear interpretation in terms of the local structure around a typical variable. First, the proof of Proposition~\ref{lem_atoms1}, which is given in Section~\ref{sec:lem_atoms1}, is based on the analysis of the distribution of $\MU_o$ on the event that the underlying Galton-Watson tree $\vT$ dies out. This event has positive probability for all $d \in (0,2)$. We show that for any rational number $q \in (0,1)$, there is a finite factor graph $G(\Phi)$ along with a distinguished variable vertex $x$ such that $G(\Phi)$ is a tree and the marginal of $\Phi$ in $x$ is $q$. As any such tree factor graph has positive probability under the law of the Galton-Watson tree underlying the construction of $\MU_o$, Proposition~\ref{lem_atoms1} is an immediate consequence of this statement.

In contrast, the proof of Proposition~\ref{lem_atoms2} in Section~\ref{sec:lem_atoms2} is based on the analysis of the distribution of $\MU_o$ on the event that the underlying Galton-Watson tree $\vT$ is infinite, which has positive probability for $d \in (1,2)$.  Exploiting the recursive nature of the tree and the almost sure decomposition of $\MU_o$, we show that in this case, the conditional distribution of $\MU_o$ cannot have point masses. The proof of this result is based on a `smoothing' argument.

\section{Preliminaries}
The following subsections provide a collection of the results and concepts from \cite{2sat} that this article builds upon. To facilitate reading and referencing to results from \cite{2sat}, we have aligned our notation with the one from the mentioned article. 
We assume throughout that $d \in (0,2)$ and define the continuous and mutually inverse functions
\begin{align}\label{eqll}
\psi:\RR&\to(0,1),\quad z\mapsto\bc{1+\tanh(z/2)}/2,&
\varphi:(0,1)&\to\RR,\quad p\mapsto\log(p/(1-p)).
\end{align}
For $k \in \NN$, we typically use the abbreviation $[k]:=\{1,\ldots, k\}$.

\subsection{{Recursive computation of tree marginals}} \label{subsec:BP_on_trees}

For the characterisation of the root marginals of finite tree formulas in Subsection \ref{subsec:finite_trees}, the following recursive computation of marginals in trees, which also forms the basis of the belief propagation algorithm, will be instrumental.

Recall the graphical representation $G(\Phi)$ of a $2$-SAT formula $\Phi$ described in Section~\ref{sec:proof_strat}, and that for satisfiable formulas $\Phi$ and variables $x$, $\mu_{G(\Phi)}(\SIGMA_x = \cdot)$ denotes the marginal distribution of the uniform distribution over solutions to $\Phi$ in $x$. Throughout this section, assume that $\Phi$ is such that $G(\Phi)$ is a finite connected graph without cycles, which we will refer to as a tree factor graph. In particular, $S(\Phi) \not= \emptyset$. To write down the desired recursions for the marginals of $\Phi$, we next define certain subgraphs (or sub-formulas) that are obtained from $G(\Phi)$ through the deletion of edges: For an edge $\{x,a\}$ in $G(\Phi)$ such that $x$ is a variable node and $a$ is a factor node, let $G(\Phi_{x \to a})$ be the subgraph of $G(\Phi)$ that consists of node $x$ along with all variable and factor nodes that can be reached from $x$ by a self-avoiding path that does not use the edge $\{x,a\}$. In other words, $G(\Phi_{x \to a})$ is the connected component that contains $x$, after removal of the edge $\{x,a\}$. As $G(\Phi)$, $G(\Phi_{x \to a})$ is again the graphical representation of a $2$-SAT formula.

It then turns out that the marginal of $x$ in $G(\Phi)$ can be expressed in terms of the marginals that are at graph-distance $2$ from $x$: Let $\partial x$ denote the collection of factor nodes that are adjacent to $x$ in $G(\Phi)$. Moreover, recall that $G(\Phi)$ is an edge-labelled graph. To make the dependence on the adjacent variables and clauses explicit, for any edge $\{x,a\}$ in $G(\Phi)$, let $\mathrm{s}(x,a):=\sign(x,a) \in \{-1,1\}$ denote the sign with which variable $x$ appears in clause $a$. 
Finally, fixing $x$, for $a \in \partial x$, let $y_a$ denote the unique second variable that appears in clause $a$. It is then well-known and can be proven by induction that the marginals of the sub-tree formulas are related by the following recursions:
\begin{align}\label{eqBPrec}
& \mu_{G(\Phi)}(\SIGMA_x = 1) \nonumber \\
   &=\frac{\prod_{a\in\partial x} (1 - \vecone\{\mathrm{s}(x,a)=-1\}\mu_{G(\Phi_{y_a \to a})}(\SIGMA_{y_a} = -\mathrm{s}(y_a,a)))} 
 	{\prod_{a\in\partial x}(1 - \vecone\{\mathrm{s}(x,a)=-1\}\mu_{G(\Phi_{y_a \to a})}(\SIGMA_{y_a} = -\mathrm{s}(y_a,a))) +\prod_{a\in\partial x}(1 - \vecone\{\mathrm{s}(x,a)=1\}\mu_{G(\Phi_{y_a \to a})}(\SIGMA_{y_a} = -\mathrm{s}(y_a,a)))}.
\end{align}
Comprehensible in-depth derivations of the general form of \eqref{eqBPrec}, along with excellent expositions of belief propagation, can be found in~\cite[\Chap~14]{MM} and \cite[Lecture 1]{Bolthausen}.

\subsection{{Distributional identities from \cite{2sat}}}
We next turn to the distributional characterisation of $\pi_d$ from \cite{2sat}. For this, let $\cP(0,1)$ denote the set of Borel probability measures on $(0,1)$, and define the operator $\DE_d:\cP(0,1)\to\cP(0,1)$, $\pi\mapsto\hat\pi$ as follows.
Let $\vd^+,\vd^- \sim \Po(d/2)$ and $(\MU_{\pi,i})_{i \geq 1}$ be a sequence of i.i.d. samples from $\pi$. Again, all random variables are assumed to be independent. The image $\hat \pi$ of $\pi$ under $\DE_d$ is then
defined as the distribution of the random variable 
\begin{align}\label{eqdensityEv}
\frac{\prod_{i=1}^{\vd^-}\MU_{\pi,i}}
{\prod_{i=1}^{\vd^-}\MU_{\pi,i}+\prod_{i=1}^{\vd^+}\MU_{\pi,i+\vd^-}}.
\end{align}
If $\delta_{1/2}$ denotes the Dirac measure in $1/2$, then Theorem 1.1 in \cite{2sat} states that for any $d \in (0,2)$, $\lim_{\ell \to \infty} \DE_d^{(\ell)}(\delta_{1/2}) = \pi_d$, where $\DE_d^{(\ell)}$ denotes the $\ell$-fold application of $\DE_d$. In particular, we have the distributional identity
\begin{align}\label{eq:distri_pi}
    \DE_d(\pi_d) = \pi_d.
\end{align}
The convergence proof in \cite{2sat} goes through the analysis of a different, related operator, which we define next, as this point of view will also be more convenient for our analysis. 
For this, let $\cP(\RR)$ denote the set of Borel probability measures on $\RR$, equipped with the weak topology. Denoting by $\cW_2(\RR)$ the set of probability measures $\rho\in\cP(\RR)$ with a finite second moment, we may adorn $\cW_2(\RR)$ with the Wasserstein metric
\begin{align}
W_2(\rho, \rho') := \inf\cbc{\bc{\int_{\RR^2}|x-y|^2\dd \gamma(x,y)}^{1/2}: \gamma \text{ has marginals } \rho, \rho'}, \qquad \rho, \rho' \in \cP(\RR).
\end{align}
It is well-known that $(\cW_2(\RR), W_2)$ is a (non-empty) complete metric space~\cite{AGS,V09}.

The second operator $\LDE_d:\cP(\RR)\to\cP(\RR)$, $\rho\mapsto\hat\rho$ is defined as follows. Let $\vd \sim \Po(d)$, $(\vec \vartheta_{\rho,i})_{i \geq 1}$ be a sequence of i.i.d. samples from $\rho$, and $(\vs_i)_{i \geq 1}, (\vs_i')_{i \geq 1}$ be sequences of i.i.d. Rademacher random variables with parameter $1/2$. All these sequences and $\vd$ are assumed to be independent of each other. The image $\hat \rho$ of $\rho$ under $\LDE_d$ is then
defined as the distribution of the random variable
\begin{align}\label{Eq_BTreeOperator}
\sum_{i=1}^{\vd}
		\vs_i\log\frac{1+\vs_i'\tanh(\vec \vartheta_{\rho,i}/2)}2. 
\end{align}

\begin{lemma}[{\cite[Lemma 4.3]{2sat}}]\label{Lemma_BP1}
For $d<2$, $\LDE_d$ is a contraction on the space $(\cW_2(\RR), W_2)$ with unique fixed point $\rho_d$.
\end{lemma}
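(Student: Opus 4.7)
The plan is to apply Banach's fixed point theorem on the complete metric space $(\cW_2(\RR), W_2)$ (non-emptiness and completeness are already stated in the excerpt), so that existence and uniqueness of $\rho_d$ reduce to verifying (i) $\LDE_d$ maps $\cW_2(\RR)$ into itself and (ii) $\LDE_d$ is a strict contraction whenever $d<2$.

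\textbf{Stability.} Write $g(s,x) := \log\tfrac{1+s\tanh(x/2)}{2}$. A direct computation shows $g(s,\cdot)$ is $1$-Lipschitz with $g(s,0)=-\log 2$, so $|g(s,x)|\le |x|+\log 2$. Letting $X$ denote the random variable in \eqref{Eq_BTreeOperator}, the summands $\vs_i g(\vs_i',\vec\vartheta_{\rho,i})$ are i.i.d.\ with mean zero (because $\vs_i$ is a centered Rademacher independent of the remaining factors). A Wald-type identity for a Poisson-indexed sum of centered i.i.d.\ terms then yields $\Erw[X^2] = d\cdot \Erw[g(\vs_1',\vec\vartheta_{\rho,1})^2] < \infty$ whenever $\rho$ has finite second moment.

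\textbf{Contraction.} Fix $\rho, \rho' \in \cW_2(\RR)$ and let $\gamma$ be an optimal $W_2$-coupling. I would construct $X$ and $X'$ distributed as $\LDE_d(\rho)$ and $\LDE_d(\rho')$ on a common probability space by sharing $\vd,(\vs_i),(\vs_i')$ and drawing i.i.d.\ pairs $(\vec\vartheta_{\rho,i},\vec\vartheta_{\rho',i})\sim\gamma$. Setting $D_i := g(\vs_i',\vec\vartheta_{\rho,i})-g(\vs_i',\vec\vartheta_{\rho',i})$, the mean-zero Wald identity applied to $X-X' = \sum_{i=1}^{\vd}\vs_i D_i$ gives $\Erw[(X-X')^2] = d\cdot\Erw[D_1^2]$. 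To bound $\Erw[D_1^2]$ sharply I would compute $\partial_x g(1,x) = 1-\psi(x)$ and $\partial_x g(-1,x) = -\psi(x)$, noting the pointwise inequality $(\partial_x g(1,x))^2 + (\partial_x g(-1,x))^2 = \psi(x)^2 + (1-\psi(x))^2 \leq 1$. Applying Cauchy--Schwarz to $g(s',x)-g(s',y)=\int_y^x \partial_t g(s',t)\,dt$ and averaging over $\vs_1'\in\{\pm 1\}$ yields
\begin{align*}
\Erw_{\vs_1'}\brk{(g(\vs_1',x)-g(\vs_1',y))^2} &\le \frac{|x-y|}{2}\abs{\int_y^x \brk{(\partial_t g(1,t))^2+(\partial_t g(-1,t))^2}\,dt} \le \frac{(x-y)^2}{2}.
\end{align*}
Since $\Erw[(\vec\vartheta_{\rho,1}-\vec\vartheta_{\rho',1})^2]=W_2(\rho,\rho')^2$ by optimality of $\gamma$, we conclude $W_2(\LDE_d(\rho),\LDE_d(\rho'))^2 \le \Erw[(X-X')^2] \le \tfrac{d}{2} W_2(\rho,\rho')^2$, a $\sqrt{d/2}$-contraction precisely when $d<2$; Banach's theorem then delivers the unique fixed point $\rho_d$.

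\textbf{Main obstacle.} The subtle step is squeezing out the factor $\tfrac12$ rather than $1$ in the bound on $\Erw[D_1^2]$: the bare $1$-Lipschitz estimate for $g(s,\cdot)$ only gives contraction for $d<1$, which falls short of the satisfiability threshold. Reaching the correct regime $d<2$ requires combining the two signs through the pointwise inequality $\psi^2+(1-\psi)^2\le 1$ inside an integrated Cauchy--Schwarz, effectively averaging the squared sensitivities of $g(\pm 1,\cdot)$ against the uniform law of the Rademacher $\vs_1'$. Without this averaging the Poisson mean cannot be pushed all the way to $2$.
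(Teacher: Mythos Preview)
Your proof is correct and follows the same mechanism as the cited argument: the present paper does not prove the lemma itself but reproduces the computation from \cite[Proof of Lemma~4.3]{2sat} in an $\cL^1$ setting within the proof of Lemma~\ref{lem:successive_theta}, where the crucial factor $\tfrac12$ is obtained by averaging the derivatives $\partial_x g(1,x)=1-\psi(x)$ and $\partial_x g(-1,x)=-\psi(x)$ over the Rademacher $\vs_1'$ via the exact identity $(1-\psi)+\psi=1$. Your $W_2$ adaptation replaces that identity by Cauchy--Schwarz together with the pointwise bound $(1-\psi)^2+\psi^2\le 1$, arriving at the same contraction factor $\sqrt{d/2}$.
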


In terms of the fixed point $\rho_d$ from Lemma~\ref{Lemma_BP1}, \cite[Lemma 4.2]{2sat} in combination with the continuous mapping theorem imply that  $\pi_d$ can be obtained as
\begin{align}\label{eq:pi_from_rho}
    \pi_d = \psi(\rho_d), 
\end{align}
where $\psi(\rho_d)$ denotes the pushforward measure of $\rho_d$ under the function $\psi$ from \eqref{eqll}\footnote{That is, $\psi(\rho_d)(A) = \rho_d(\psi^{-1}(A))$ for all Borel sets $A \subset (0,1)$.}. 
In Section~\ref{sec-as-conv-root-marignal}, we will use Lemma~\ref{Lemma_BP1} in combination with the relation $\pi_d = \psi(\rho_d)$ to argue that indeed, the random variable $\MU_o$ from Proposition~\ref{Prop_asconvergence} has distribution $\pi_d$.

\section{Almost sure convergence of the root marginal in a truncated Galton-Watson tree}\label{sec-as-conv-root-marignal}

The basis for both Propositions \ref{lem_atoms1} and \ref{lem_atoms2} is the construction of a random variable $\MU_o$ with distribution $\pi_d$ that satisfies the distributional identity \eqref{eq:distri_pi} in an almost sure sense. For this, we will make use of the fact that the local limit of random $2$-SAT is given by a multi-type branching process, and construct $\MU_o$ exploiting the recursive nature of the latter.

\subsection{Multi-type Galton-Watson process}
Consider the following $5$-type Galton-Watson process $\vT$ with one \textit{variable} type and four \textit{clause} types $(1,1), (1,-1), (-1,1)$ and $(-1,-1)$, as described in \cite[Section 2.2]{2sat}: The root node $o$ of the Galton-Watson tree $\vT$ is always a variable node. Variable nodes give rise to a $\Po(d/4)$-number of clause descendants of type $(s_1,s_2)$ for each $(s_1,s_2) \in \{-1,1\}^2$, independently for different clause types and distinct variables. Clause nodes of any of the four types have exactly one offspring of the variable type. As the survival probability of $\vT$ coincides with that of single-type $\Po(d)$-Galton Watson tree, $\vT$ is finite with probability $1$ for $d \leq 1$, while it is infinite with positive probability for $d >1$.  

For any $\ell \geq 0$, let $\vT^{(\ell)}$ denote truncation of $\vT$ at level $\ell$. In particular, $\vT^{(\ell)}$ is a finite tree.
 The Galton-Watson tree $\vT^{(2\ell)}$ truncated at an even level can be naturally regarded as a $2$-SAT formula, where variable nodes correspond to Boolean variables and clause nodes correspond to clauses. Here, type $(s_1, s_1') \in \{-1,1\}^2$ indicates that the parent variable of the clause appears with sign $s_1$ in the present clause, while its child appears with sign $s_1'$. We will also use the notation $\sign(x,a) \in \{-1,1\}$ to indicate the sign with which variable $x$ appears in clause $a$. Thus, if $a=(s_a,s_a')$ is a clause node with parent variable $u$ and child variable $v$, $\sign(u,a)=s_a$ and $\sign(v,a) = s_a'$. Indeed, \cite[Proposition 2.6]{2satclt} shows that as $n \to \infty$, $\vT$ accurately captures the local neighbourhood structure of a uniformly chosen variable in $\PHI_n$.

To formulate our almost sure decomposition, consider a sequence $(\vT_j)_{j \geq 1}$ of i.i.d. copies of the $5$-type Galton Watson tree as described above with respective roots $(o_j)_{j \geq 1}$, and let $(\vec a_{o_j})_{j \geq 1} = ((\vec s_i, \vec s_i'))_{j \geq 1}$ be an i.i.d. sequence of clauses, independent of $(\vT_j)_{j \geq 1}$. We then specifically regard $\vT$ as being built by first generating $\vd_o \sim \Po(d)$ and subsequently joining the roots $o_1, \ldots, o_{\vd}$ of the first $\vd$ trees in the sequence $(\vT_j)_{j \geq 1}$ to a new root variable $o$ via the clauses $\vec a_{o_1}, \ldots, \vec a_{o_{\vd}}$. For an illustration of this procedure, see Figure~\ref{fig:vT} below. As above, we write $\vT^{(2\ell)}_j$ for the truncation of the subtree $\vT_j$ at level $2\ell$. 

\begin{figure}[b]
\begin{tikzpicture}[-,>=stealth',level/.style={sibling distance = 3cm/#1,
  level distance = 1.5cm}] 
  \filldraw[fill=red!20] (-4.5,-6.5) rectangle (-1.5,-2.5) (-2,-3) node{$\vT_1$};
  \filldraw[fill=red!20] (-0.75,-3.5) rectangle (0.75,-2.5) (0.5,-3) node{$\vT_2$};
  \filldraw[fill=red!20] (1,-6.5) rectangle (5,-2.5) (4.5,-3)node{$\vT_3$};
\node [arn_n] {$o$}
    child{ node [arn_r] {$\vec a_{o_1}$} 
            child{ node [arn_n] {$o_1$} 
            	child{ node [arn_r] {} child{ node [arn_n] {}
             edge from parent node[left]{{\scriptsize $+1$}}}
             edge from parent node[above left]{{\scriptsize $+1$}}} 
						child{ node [arn_r] {}
							child{ node [arn_n] {}
       edge from parent node[left]{{\scriptsize $-1$}}}
                            edge from parent node[left]{{\scriptsize $-1$}}}
                     child{ node [arn_r] {}
							child{ node [arn_n] {}
       edge from parent node[left]{{\scriptsize $-1$}}}
                            edge from parent node[above right]{{\scriptsize $+1$}}}
            edge from parent node[left]{{\scriptsize $-1$}}}       
            edge from parent node[above left]{{\scriptsize $+1$}}
    }
    child{ node [arn_r] {$\vec a_{o_2}$} 
            child{ node [arn_n] {$o_2$} 
    edge from parent node[left]{{\scriptsize $+1$}}}
    edge from parent node[left]{{\scriptsize $-1$}}}
    child{ node [arn_r] {$\vec a_{o_3}$} 
            child{ node [arn_n] {$o_3$} 
            	child{ node [arn_r] {} child{ node [arn_n] {}
             edge from parent node[left]{{\scriptsize $+1$}}}
             edge from parent node[above left]{{\scriptsize $+1$}}} 
             child{ node [arn_r] {} child{ node [arn_n] {}
             edge from parent node[left]{{\scriptsize $+1$}}}
             edge from parent node[left]{{\scriptsize $-1$}}}
							child{ node [arn_r] {}
							child{ node [arn_n] {}
       edge from parent node[left]{{\scriptsize $-1$}}}
                            edge from parent node[right]{{\scriptsize $-1$}}}
                     child{ node [arn_r] {}
							child{ node [arn_n] {}
       edge from parent node[left]{{\scriptsize $+1$}}}
                            edge from parent node[above right]{{\scriptsize $+1$}}}
            edge from parent node[left]{{\scriptsize $-1$}}}       
            edge from parent node[above left]{{\scriptsize $-1$}}
    }

; 

\end{tikzpicture}
  \caption{Illustration of the tree $\vT$ obtained from joining the roots $o_1, o_2,o_3$ of the $\vd_o=3$ trees $\vT_1,\vT_2,\vT_3$ to a new root variable $o$ via the clauses $\vec a_{o_1}, \vec a_{o_2}, \vec a_{o_3}$ (truncated at level $4$).}
  \label{fig:vT}
\end{figure}

Using the natural correspondence between $\vT^{(2\ell)}$ and $2$-SAT formulas, let $S(\vT^{(2\ell)})$ be the set of all satisfying assignments of the tree formula $\vT^{(2\ell)}$. As any such `tree formula' has at least one satisfying assignment, for any $\ell \geq 0$, the uniform distribution $\mu_{\vT^{(2\ell)}}$ over solutions to $\vT^{(2\ell)}$ from \eqref{eqGibbs} is well-defined.  
Therefore, as before, we use the notation $\mu_{T}(\SIGMA \in A)$ to denote the probability that a sample $\SIGMA$ from $\mu_{T}$ takes its value in some measurable set $A$.  The main result of this section is the following:

\begin{proposition}\label{Prop_asconvergence}
Let $d \in (0,2)$. Then all of the following hold:
\begin{itemize}
    \item[i)] There exist integrable random variables $\vec \mu_o, (\vec \mu_{o_j})_{j \geq 1}$ such that
\begin{align}\label{eqProp_asconvergence1}
\mu_{\vT^{(2\ell)}}(\SIGMA_o =1) \longrightarrow \vec \mu_o \qquad \text{and} \qquad \mu_{\vT_j^{(2\ell)}}(\SIGMA_{o_j}=1) \longrightarrow \vec \mu_{o_j} \quad \text{for all } j \geq 1 \qquad \qquad \text{ as } \ell \to \infty,
\end{align}
almost surely and in $\cL^1$.
\item[ii)] $(\vec \mu_{o_j})_{j \geq 1}$ is a sequence of independent random variables that is also independent of $\vd_o$ and $(\va_{o_j})_{j \geq 1}$.
\item[iii)] Let $\vec D_{\pm} = \{j \in [\vd_o]:  \vec s_j = \pm 1\}$ denote the set of subtrees of the root $o$ with $\sign(o, \vec a_{o_j}) = \pm 1$.  
Then almost surely,
\begin{align}\label{eqProp_asconvergence2}
\vec \mu_o =  \frac{\prod_{i\in \vec D_{-}}\MU_{o_i}^{(1+\vec s_i')/2}(1-\MU_{o_i})^{(1-\vec s_i')/2}}
{\prod_{i\in \vec D_{-}}\MU_{o_i}^{(1+\vec s_i')/2}(1-\MU_{o_i})^{(1-\vec s_i')/2}+\prod_{i\in \vec D_{+} }\MU_{o_i}^{(1+\vec s_i')/2}(1-\MU_{o_i})^{(1-\vec s_i')/2}}.
\end{align}
Moreover, all random variables $\vec \mu_o, (\vec \mu_{o_j})_{j \geq 1}$ from i) have distribution $\pi_d$.
\end{itemize}
\end{proposition}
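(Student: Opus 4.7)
The strategy is to lift the marginal sequence to log-likelihood ratios and leverage the contraction underlying Lemma~\ref{Lemma_BP1}. For each variable $v$ of $\vT$ and every $\ell\geq 0$, writing $\vT_v$ for the subtree of $\vT$ rooted at $v$, set $\vec\vartheta_v^{(\ell)}:=\varphi(\mu_{\vT_v^{(2\ell)}}(\SIGMA_v=1))$; in particular $\vec\vartheta_v^{(0)}=0$. Rewriting \eqref{eqBPrec} through the logit $\varphi$ and using $\psi(z)=(1+\tanh(z/2))/2$, a direct computation yields the LLR recursion
\begin{align*}
\vec\vartheta_o^{(\ell+1)}=-\sum_{a\in\partial o}\sign(o,a)\,g\bc{\sign(y_a,a),\vec\vartheta_{y_a}^{(\ell)}},\qquad g(s,z):=\log\frac{1+s\tanh(z/2)}{2},
\end{align*}
which in distribution matches one application of the operator $\LDE_d$ from \eqref{Eq_BTreeOperator}, with $\sign(o,a)$ and $\sign(y_a,a)$ playing the roles of $\vs_i$ and $\vs_i'$. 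Hence $\vec\vartheta_o^{(\ell)}$ is distributed as $\LDE_d^{(\ell)}(\delta_0)$ and converges in $W_2$ to $\rho_d$ by Lemma~\ref{Lemma_BP1}.

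\emph{Part (i).} Realize the full family $(\vec\vartheta_v^{(\ell)})_{v,\ell}$ on a single probability space carrying the infinite tree $\vT$ with all its edge signs. Writing $\vY_a^{(\ell)}:=-\sign(o,a)\brk{g(\sign(y_a,a),\vec\vartheta_{y_a}^{(\ell)})-g(\sign(y_a,a),\vec\vartheta_{y_a}^{(\ell-1)})}$, we have $\vec\vartheta_o^{(\ell+1)}-\vec\vartheta_o^{(\ell)}=\sum_{a\in\partial o}\vY_a^{(\ell)}$. Since $\sign(o,a)$ is an independent Rademacher and distinct subtrees of $o$ are independent, the $\vY_a^{(\ell)}$ are centered and mutually independent conditionally on $\vd_o$. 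Writing $g(s,z)=\bar g(z)+sz/2$ with $\bar g(z):=\tfrac12\log\tfrac{1-\tanh^2(z/2)}{4}$ (so that $g(+1,z)-g(-1,z)=z$ and $|\bar g'|\leq 1/2$) and averaging over the Rademacher $\sign(y_a,a)$, the identity $\tfrac12(A+B)^2+\tfrac12(A-B)^2=A^2+B^2$ together with $|A|\leq|B|$ gives
\begin{align*}
\Erw\brk{(\vY_a^{(\ell)})^2}\leq\tfrac12\Erw\brk{(\vec\vartheta_{y_a}^{(\ell)}-\vec\vartheta_{y_a}^{(\ell-1)})^2}=\tfrac12\Delta_\ell,\qquad\Delta_\ell:=\Erw\brk{(\vec\vartheta_o^{(\ell)}-\vec\vartheta_o^{(\ell-1)})^2},
\end{align*}
using the recursive self-similarity of $\vT$. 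Summing over $a\in\partial o$ (Wald's identity) yields $\Delta_{\ell+1}\leq (d/2)\Delta_\ell$; since $d<2$, $\Delta_\ell$ decays geometrically, so by Cauchy--Schwarz $\sum_\ell\Erw|\vec\vartheta_o^{(\ell+1)}-\vec\vartheta_o^{(\ell)}|<\infty$ and $(\vec\vartheta_o^{(\ell)})_\ell$ is a.s.\ Cauchy, converging a.s.\ and in $\cL^2$ to some $\vec\vartheta_o$. Setting $\vec\mu_o:=\psi(\vec\vartheta_o)$, continuity of $\psi$ and dominated convergence (marginals lie in $[0,1]$) give a.s.\ and $\cL^1$ convergence of $\mu_{\vT^{(2\ell)}}(\SIGMA_o=1)$ to $\vec\mu_o$. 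Applying the same argument to each $\vT_j$ produces $\vec\mu_{o_j}$ analogously.

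\emph{Parts (ii) and (iii).} Part (ii) is immediate: $\vec\mu_{o_j}\in\sigma(\vT_j)$, and the $\vT_j$ are i.i.d.\ and independent of $\vd_o,(\vec a_{o_j})_j$ by the construction of $\vT$. For \eqref{eqProp_asconvergence2}, apply \eqref{eqBPrec} to $\vT^{(2(\ell+1))}$---whose structure around $o$ equals the join of the subtrees $\vT_j^{(2\ell)}$ through the clauses $\vec a_{o_j}$, $j\in[\vd_o]$---and let $\ell\to\infty$; on the almost sure event $\{\vd_o<\infty\}$ the recursion involves finitely many factors, each converging a.s., and the limiting $\vec\mu_{o_j}\in(0,1)$ a.s.\ keeps the denominator bounded away from zero, so \eqref{eqProp_asconvergence2} follows. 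For the distributional identification, combining $\vec\vartheta_o^{(\ell)}\to\vec\vartheta_o$ a.s.\ with $\cL(\vec\vartheta_o^{(\ell)})\to\rho_d$ weakly (Lemma~\ref{Lemma_BP1}) yields $\cL(\vec\vartheta_o)=\rho_d$, hence $\vec\mu_o\sim\psi(\rho_d)=\pi_d$ by \eqref{eq:pi_from_rho} and the continuous mapping theorem; the same argument covers each $\vec\mu_{o_j}$.

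\emph{Main obstacle.} The technical crux is securing a quantitative $\cL^2$-contraction along the \emph{sample-path} coupling of $(\vec\vartheta_o^{(\ell)})_\ell$, rather than merely along their marginal laws as furnished by Lemma~\ref{Lemma_BP1}. The decomposition $g(s,z)=\bar g(z)+sz/2$ with $|\bar g'|\leq 1/2$, combined with averaging over the Rademacher $\sign(y_a,a)$ and independence across the clauses $a\in\partial o$, produces the effective per-step factor $d/2<1$ that drives the Borel--Cantelli argument; the naive Lipschitz bound $\mathrm{Lip}(g(s,\cdot))\leq 1$ alone would only yield contraction for $d<1$, which does not cover the full satisfiable regime $d\in(0,2)$.
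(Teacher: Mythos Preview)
Your argument is correct and follows the same overall strategy as the paper: pass to log-likelihood ratios, establish a per-step contraction with factor $d/2$ obtained by averaging over the Rademacher sign $\vs_i'$, deduce summability of the increments and hence almost sure convergence, and identify the limiting law via Lemma~\ref{Lemma_BP1} and \eqref{eq:pi_from_rho}. The paper implements the contraction in $\cL^1$, bounding $\Erw|\vec\vartheta_o^{(\ell+1)}-\vec\vartheta_o^{(\ell)}|$ by the triangle inequality and the explicit integrals $\int\frac{1\mp\tanh(z/2)}{2}\,\dd z$, and then invokes Markov plus Borel--Cantelli; you instead work in $\cL^2$, using the centering supplied by the outer Rademacher $\vs_i$ to turn the sum over $\partial o$ into a sum of second moments via Wald. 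Your algebraic decomposition $g(s,z)=\bar g(z)+sz/2$ with $|\bar g'|\le 1/2$ is a tidy repackaging of exactly the derivative identities the paper employs, and both routes deliver the same contraction constant $d/2$; the $\cL^2$ version additionally exploits orthogonality, which the $\cL^1$ route does not need, but otherwise the two arguments are equivalent in content and strength.
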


\subsection{Proof of Proposition \ref{Prop_asconvergence}}
The remainder of this section is devoted to the proof of Proposition \ref{Prop_asconvergence}. Instead of working with the marginals $\mu_{\vT^{(2\ell)}}(\SIGMA_o =1)$, it will be easier to work with their log-likelihood ratios. To this end, for $\ell \geq 0$ and a variable node $y$ of $\vT^{(2\ell)}$, let $\vT_y^{(2\ell)}$ denote the sub-formula of $\vT^{(2\ell)}$ consisting of $y$ and its progeny, and define
\begin{align}\label{eq:def_theta}
    \vec \vartheta_y^{(\ell)} = \log(\mu_{\vT_y^{(2\ell)}}(\SIGMA_y=1)/\mu_{\vT_y^{(2\ell)}}(\SIGMA_y=-1)).
\end{align}
This quantity is well-defined as any tree-formula $\vT_y^{(2\ell)}$ has both a satisfying assignment $\SIGMA \in S(\vT_y^{(2\ell)})$ with $\SIGMA_y=1$ and a satisfying assignment $\SIGMA' \in S(\vT_y^{(2\ell)})$ with $\SIGMA'_y=1$. Moreover, if $\psi$ is defined as in \eqref{eqll}, we have the relation
\begin{align}\label{eq-mu-psi-vtheta}
    \mu_{\vT_y^{(2\ell)}}(\SIGMA_y=1) = \psi(\vec \vartheta_y^{(\ell)}).
\end{align}
Finally, for a variable node $y$ of $\vT^{(2\ell)}$, let $\vd_y$ denote the number of its offspring in $\vT_y^{(2\ell)}$, $y_1, \ldots, y_{\vd_y}$ the associated pendant variables and $\vec a_{y_1}, \ldots, \vec a_{y_{\vd_{y}}}$ its pendant clauses. 
By the recursive nature of marginals on finite tree formulas (see \eqref{eqBPrec}) we have the almost sure relation
\begin{align}\label{eq:BPinGW}
 \vec\vartheta_y^{(\ell)}&= -\sum_{i=1}^{\vd_y}\sign(y,\vec a_{y_i})\log\frac{1+\sign(y_i,\vec a_{y_i})\tanh(\vec\vartheta^{(\ell-1)}_{y_i}/2)}{2}.
\end{align}
The basic idea of the proof of Proposition \ref{Prop_asconvergence} is to re-use a contraction argument from \cite{2sat} on the sequence 
$(\vec \vartheta_o^{(\ell)})_{\ell \geq 1}$.
This gives $\cL^1$-convergence (and thus convergence in probability) of 
$(\vec \vartheta_o^{(\ell)})_{\ell \geq 1}$, together with a speed of convergence which is sufficient to turn the convergence in probability into a.s. convergence, using the first Borel-Cantelli lemma.

\subsubsection{Convergence of the log-likelihood ratios}\label{Sec:Conv_LL}

We  first show that 
$(\vec \vartheta_o^{(\ell)})_{\ell \geq 0}$ and $(\vec \vartheta_{o_j}^{(\ell)})_{\ell \geq 0}$ are $\cL^1$-Cauchy sequences for any $j \geq 1$. Indeed, as the trees $\vT$ and $\vT_j$ are identically distributed Galton-Watson trees, we immediately obtain the following.
\begin{fact} \label{fact:subtrees}
    For any $j \geq 1$, the sequences $(\vec \vartheta_{o}^{(\ell)})_{\ell \geq 0}$ and $(\vec \vartheta_{o_j}^{(\ell)})_{\ell \geq 0}$ are identically distributed.
\end{fact}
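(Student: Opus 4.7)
The plan is to derive this fact directly from the equality in distribution $\vT \stackrel{d}{=} \vT_j$ (as labelled rooted $5$-type Galton-Watson trees), combined with the observation that for every $\ell\ge 0$, the log-likelihood ratio $\vec\vartheta_y^{(\ell)}$ is a deterministic measurable functional of the truncated sub-tree $\vT_y^{(2\ell)}$ rooted at $y$.

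First, I would verify that the construction from Section~\ref{sec-as-conv-root-marignal} does in fact realise $\vT$ as a $5$-type Galton-Watson tree obeying the very same offspring rules stated for each $\vT_j$. The root $o$ receives $\vd_o\sim\Po(d)$ clause children carrying i.i.d.\ uniform sign labels $(\vec s_j,\vec s_j')\in\{-1,1\}^2$; by Poisson thinning on the sign type, this decomposes into four independent $\Po(d/4)$ counts, which is exactly the law of the clause-type offspring of a variable. Each such clause has precisely one variable child, namely $o_j$, which is itself the root of an independent copy $\vT_j$. Since $\vT_j$ is defined by these same rules, it follows that $\vT\stackrel{d}{=}\vT_j$ as random labelled rooted trees for any fixed $j\ge 1$.

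Next, I would observe that $\vec\vartheta_y^{(\ell)}$ depends on $\vT_y$ only through its truncation $\vT_y^{(2\ell)}$, via the recursion \eqref{eq:BPinGW} started from the boundary value $\vec\vartheta_{y'}^{(0)}=0$ at every depth-$2\ell$ variable node $y'$ (since a formula of depth $0$ has both marginals equal to $1/2$). Thus the whole sequence $(\vec\vartheta_y^{(\ell)})_{\ell\ge 0}$ is the image of $\vT_y$ under a single measurable map into $\RR^{\NN}$ that does not depend on the identity of $y$, but only on the local structure of $\vT_y$. Applying this map to $o$ and to $o_j$ and invoking $\vT\stackrel{d}{=}\vT_j$ immediately yields the asserted distributional equality of the two sequences.

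The statement is essentially bookkeeping and I do not anticipate any substantive obstacle. The only points to handle with care are that the labelled-tree structure (the sign of each clause-variable edge, together with the clause type $(s_1,s_1')$) is transferred faithfully under the identification $\vT\stackrel{d}{=}\vT_j$, and that the boundary-value choice at depth $2\ell$ is intrinsic to the sub-tree and in particular does not depend on the root.
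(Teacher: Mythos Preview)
Your proposal is correct and follows essentially the same approach as the paper, which derives the fact in a single sentence from the observation that $\vT$ and $\vT_j$ are identically distributed Galton-Watson trees. Your write-up merely spells out the two ingredients (Poisson thinning to match the offspring laws, and the fact that the log-likelihood sequence is a fixed measurable functional of the rooted labelled tree) that the paper leaves implicit.
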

Thus, it is sufficient to consider the sequence $(\vec \vartheta_o^{(\ell)})_{\ell \geq 0}$. We first establish the following lemma:

\begin{lemma}\label{lem:successive_theta}
   For any $\ell \geq 1$ and $d \in (0,2)$, we have
   \[\Erw\brk{\abs{\vec \vartheta_o^{(\ell+1)} - \vec \vartheta_o^{(\ell)}}} \leq \frac{d}{2} \cdot \Erw\brk{\abs{\vec \vartheta_o^{(\ell)} - \vec \vartheta_o^{(\ell-1)}}}. \]
\end{lemma}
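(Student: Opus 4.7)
The plan is to unfold the recursion \eqref{eq:BPinGW} one step at the root $o$ and then exploit the uniform distribution of the parent--child signs along the root clauses to produce the contraction factor $1/2$ via an averaging identity. Introducing the shorthand
\[
g_s(z) := \log\frac{1+s\tanh(z/2)}{2} = -\log(1+e^{-sz}), \qquad s\in\{\pm1\},
\]
and writing $\vs_i' := \sign(o_i,\vec a_{o_i})$, the recursion applied to $\vec\vartheta_o^{(\ell+1)}$ and $\vec\vartheta_o^{(\ell)}$ together with the triangle inequality and $|\sign(o,\vec a_{o_i})|=1$ yields
\[
\abs{\vec\vartheta_o^{(\ell+1)} - \vec\vartheta_o^{(\ell)}} \le \sum_{i=1}^{\vd_o}\abs{g_{\vs_i'}(\vec\vartheta_{o_i}^{(\ell)}) - g_{\vs_i'}(\vec\vartheta_{o_i}^{(\ell-1)})}.
\]

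The key ingredient is a sign-averaged Lipschitz identity. Since $g_{+1}$ is strictly increasing with $g_{+1}'(z)=1/(1+e^z)$ while $g_{-1}$ is strictly decreasing with $-g_{-1}'(z)=1/(1+e^{-z})$, we have $g_{+1}'(z)-g_{-1}'(z)\equiv 1$, so that for all $x,y\in\RR$,
\[
\abs{g_{+1}(x)-g_{+1}(y)} + \abs{g_{-1}(x)-g_{-1}(y)} = \int_{\min(x,y)}^{\max(x,y)} \bigl(g_{+1}'(z)-g_{-1}'(z)\bigr)\,\dd z = \abs{x-y}.
\]
By the construction of $\vT$, the sign $\vs_i'$ is uniform on $\{\pm1\}$ and independent of the subtree $\vT_i$; conditioning on $\vT_i$ (with respect to which $\vec\vartheta_{o_i}^{(\ell)}$ and $\vec\vartheta_{o_i}^{(\ell-1)}$ are measurable) and averaging over $\vs_i'$ then gives
\[
\Erw\brk{\abs{g_{\vs_i'}(\vec\vartheta_{o_i}^{(\ell)}) - g_{\vs_i'}(\vec\vartheta_{o_i}^{(\ell-1)})}\,\big|\,\vT_i} = \tfrac12\abs{\vec\vartheta_{o_i}^{(\ell)} - \vec\vartheta_{o_i}^{(\ell-1)}}.
\]

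To conclude, I would integrate out the remaining randomness. Since $\vd_o\sim\Po(d)$ is independent of the i.i.d.\ sequence of decorated subtrees $((\vT_i,\vec a_{o_i}))_{i\ge 1}$, Wald's identity together with Fact~\ref{fact:subtrees} yields
\[
\Erw\brk{\abs{\vec\vartheta_o^{(\ell+1)}-\vec\vartheta_o^{(\ell)}}} \le \Erw[\vd_o]\cdot\tfrac12\cdot\Erw\brk{\abs{\vec\vartheta_{o_1}^{(\ell)}-\vec\vartheta_{o_1}^{(\ell-1)}}} = \tfrac{d}{2}\,\Erw\brk{\abs{\vec\vartheta_o^{(\ell)}-\vec\vartheta_o^{(\ell-1)}}},
\]
which is the asserted bound.

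The step requiring most care is the sign-averaging identity: each of $g_{+1}$ and $g_{-1}$ is only $1$-Lipschitz on $\RR$, so no deterministic per-summand contraction is available. The factor $\tfrac12$ genuinely arises from the cancellation $g_{+1}'(z)-g_{-1}'(z)\equiv 1$ after averaging over the uniform sign $\vs_i'$; combined with $\Erw[\vd_o]=d$ this produces the ratio $d/2$, which is $<1$ precisely on the satisfiable window $d\in(0,2)$, in harmony with Lemma~\ref{Lemma_BP1}.
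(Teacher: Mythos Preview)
Your proof is correct and follows essentially the same approach as the paper: both unfold the recursion at the root, apply the triangle inequality, and then obtain the factor $\tfrac12$ by averaging over the uniform sign $\vs_i'$ via the identity $|g_{+1}(x)-g_{+1}(y)|+|g_{-1}(x)-g_{-1}(y)|=|x-y|$ (the paper writes this as the two integrals $\int\frac{1\mp\tanh(z/2)}{2}\,\dd z$, which is the same computation in $\tanh$ notation). The only extra step in the paper is a preliminary induction verifying $\Erw[|\vec\vartheta_o^{(\ell)}|]<\infty$; in your argument this is not needed for the inequality itself, since the summands are nonnegative and Tonelli/Wald applies in $[0,\infty]$.
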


\begin{proof}
First, using induction, we will check that for all $\ell \geq 0$, $\Erw[|\vec \vartheta_o^{(\ell)}|] < \infty$. For this, observe that $\vec \vartheta_o^{(0)} = 0$, from which the claim follows in the case $\ell=0$. For $\ell \geq 1$, assume that $\Erw[|\vec \vartheta_o^{(\ell-1)}|] < \infty$. Thanks to \eqref{eq:BPinGW},
\begin{align*}
 \Erw\brk{\abs{\vec\vartheta_o^{(\ell)}}}&= \Erw\brk{\abs{\sum_{i=1}^{\vd_o}\sign(o,\vec a_{o_i})\log\frac{1+\sign(o_i,\vec a_{o_i})\tanh(\vec\vartheta^{(\ell-1)}_{o_i}/2)}{2}}} \leq d \Erw\brk{\abs{\log\frac{1+\sign(o_1,\vec a_{o_1})\tanh(\vec\vartheta^{(\ell-1)}_{o_1}/2)}{2}}}.
\end{align*}
Now, since both the derivatives of $x \mapsto \log \frac{1-\tanh(x/2)}{2}$ and of $x \mapsto \log \frac{1+\tanh(x/2)}{2}$ are bounded by one in absolute value, we obtain
\begin{align*}
 \Erw\brk{\abs{\vec\vartheta_o^{(\ell)}}}&\leq  d \bc{\Erw\brk{\abs{\vec\vartheta^{(\ell-1)}_{o_1}}} + |\log 2|} = d \bc{\Erw\brk{\abs{\vec\vartheta^{(\ell-1)}_{o}}} + |\log 2|} < \infty 
\end{align*}
due to Fact \ref{fact:subtrees} and the induction hypothesis.

    Next, fix $\ell \geq 1$ and recall that $\vs_i = \sign(o,\va_{o_i}), \vs_i' = \sign(o_i,\va_{o_i})$ for $i \geq 1$. 
    Thanks to \eqref{eq:BPinGW}, we have
    \begin{align}
        \Erw\brk{\abs{\vec \vartheta_o^{(\ell+1)} - \vec \vartheta_o^{(\ell)}}} &= \Erw\brk{\abs{-\sum_{i=1}^{\vd_o}\vs_i \log\frac{1+\vs_i'\tanh(\vec \vartheta_{o_i}^{(\ell)}/2)}{2} +\sum_{i=1}^{\vd_o}\vs_i \log\frac{1+\vs_i' \tanh(\vec \vartheta^{(\ell-1)}_{o_i}/2)}{2}}}.
    \end{align}
    Due to independence, the last expression can be further simplified as 
    \begin{align}
        \Erw\brk{\abs{\vec \vartheta_o^{(\ell+1)} - \vec \vartheta_o^{(\ell)}}} &= \Erw\brk{\abs{\sum_{i=1}^{\vd_o}\vs_i\log\frac{1+\vs_i'\tanh(\vec \vartheta_{o_i}^{(\ell-1)}/2)}{{1+\vs_i'\tanh(\vec \vartheta^{(\ell)}_{o_i}/2)}} }} \leq d \Erw\brk{\abs{\log\frac{1+\vs_1'\tanh(\vec \vartheta_{o_1}^{(\ell-1)}/2)}{{1+\vs_1'\tanh(\vec \vartheta^{(\ell)}_{o_1}/2)}} }}.
    \end{align}
    Now by an analogous line of computation as in \cite[Proof of Lemma 4.3]{2sat}, as the functions $z\mapsto\log(1+\tanh(z/2))$ and $z\mapsto\log(1-\tanh(z/2))$  are increasing and decreasing, respectively, we obtain
    \begin{align*}
    \Erw\brk{\abs{\log\frac{1+\tanh(\vec \vartheta_{o_1}^{(\ell-1)}/2)}{{1+\tanh(\vec \vartheta^{(\ell)}_{o_1}/2)}} }}    &=
    \abs{\int_{\vec \vartheta_{o_1}^{(\ell)}}^{\vec \vartheta_{o_1}^{(\ell-1)}}\frac{\partial\log(1+\tanh(z/2))}{\partial z}\dd z}
    =\int_{\vec \vartheta_{o_1}^{(\ell)}\wedge \vec \vartheta_{o_1}^{(\ell-1)}}^{\vec \vartheta_{o_1}^{(\ell)}\vee \vec \vartheta_{o_1}^{(\ell-1)}}\frac{1-\tanh(z/2)}{2}\dd z,\\
    \Erw\brk{\abs{\log\frac{1-\tanh(\vec \vartheta_{o_1}^{(\ell-1)}/2)}{{1-\tanh(\vec \vartheta^{(\ell)}_{o_1}/2)}} }}    &=
    \abs{\int_{\vec \vartheta_{o_1}^{(\ell)}}^{\vec \vartheta_{o_1}^{(\ell-1)}}\frac{\partial\log(1-\tanh(z/2))}{\partial z}\dd z}
    =\int_{\vec \vartheta_{o_1}^{(\ell)}\wedge \vec \vartheta_{o_1}^{(\ell-1)}}^{\vec \vartheta_{o_1}^{(\ell)}\vee \vec \vartheta_{o_1}^{(\ell-1)}}    \frac{1+\tanh(z/2)}{2}\dd z.
    \end{align*}
    Conditioning on $\vec s_1'$, we thus obtain
    \begin{align}\label{eq:lem_successive}
        \Erw\brk{\abs{\vec \vartheta_o^{(\ell+1)} - \vec \vartheta_o^{(\ell)}}} \leq \frac{d}{2} \Erw\brk{\abs{\vec \vartheta_{o_1}^{(\ell)} - \vec \vartheta_{o_1}^{(\ell-1)}}} = \frac{d}{2} \Erw\brk{\abs{\vec \vartheta_{o}^{(\ell)} - \vec \vartheta_{o}^{(\ell-1)}}},
    \end{align}
    where the last equality is once more a consequence of Fact \ref{fact:subtrees}.
\end{proof}

\begin{corollary}\label{cor:convergence_theta1}
 For any $d \in (0,2)$, $(\vec \vartheta_o^{(\ell)})_{\ell \geq 0}$ is an $\cL^1$-Cauchy sequence.
\end{corollary}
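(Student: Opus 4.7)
The plan is to derive the Cauchy property directly by iterating the contraction estimate of Lemma~\ref{lem:successive_theta}. Since $d \in (0,2)$, the ratio $d/2 \in (0,1)$, and a standard telescoping argument together with the geometric decay will suffice. The only potentially subtle point is to check that the base term $\Erw\brk{\abs{\vec\vartheta_o^{(1)} - \vec\vartheta_o^{(0)}}}$ is actually finite so that the geometric bound yields a summable series; this is immediate from the inductive $\cL^1$-bound established at the start of the proof of Lemma~\ref{lem:successive_theta} (using $\vec\vartheta_o^{(0)} = 0$).

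Concretely, I would first iterate Lemma~\ref{lem:successive_theta} to obtain
\[
\Erw\brk{\abs{\vec\vartheta_o^{(\ell+1)} - \vec\vartheta_o^{(\ell)}}} \;\leq\; \left(\frac{d}{2}\right)^{\ell} \Erw\brk{\abs{\vec\vartheta_o^{(1)} - \vec\vartheta_o^{(0)}}} \;=\; \left(\frac{d}{2}\right)^{\ell} \Erw\brk{\abs{\vec\vartheta_o^{(1)}}},
\]
valid for all $\ell \geq 0$. The right-hand side is finite by the induction performed inside the proof of Lemma~\ref{lem:successive_theta}, which gives $\Erw\brk{\abs{\vec\vartheta_o^{(1)}}} \leq d \cdot |\log 2|$ since $\vec\vartheta_o^{(0)} = 0$.

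Next, for integers $m > \ell \geq 0$, I would apply the triangle inequality to the telescoping sum to get
\[
\Erw\brk{\abs{\vec\vartheta_o^{(m)} - \vec\vartheta_o^{(\ell)}}} \;\leq\; \sum_{k=\ell}^{m-1} \Erw\brk{\abs{\vec\vartheta_o^{(k+1)} - \vec\vartheta_o^{(k)}}} \;\leq\; \Erw\brk{\abs{\vec\vartheta_o^{(1)}}} \sum_{k=\ell}^{m-1} \left(\frac{d}{2}\right)^{k} \;\leq\; \frac{(d/2)^{\ell}}{1-d/2}\, \Erw\brk{\abs{\vec\vartheta_o^{(1)}}}.
\]
Since $d/2 < 1$, the right-hand side tends to $0$ as $\ell \to \infty$ uniformly in $m > \ell$, which is exactly the Cauchy condition in $\cL^1$.

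There is no serious obstacle: the whole argument is just geometric summation once Lemma~\ref{lem:successive_theta} is in hand. The only thing worth flagging is that the contraction factor $d/2$ is strictly less than $1$ precisely in the satisfiable regime $d < 2$ considered here, which is what makes the geometric series converge and hence makes the approach work throughout the full range of $d$ of interest.
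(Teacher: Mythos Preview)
Your proposal is correct and follows essentially the same approach as the paper: iterate Lemma~\ref{lem:successive_theta} to get geometric decay of the consecutive differences, then telescope and sum the resulting geometric series. The only cosmetic differences are that you make the finiteness of $\Erw\brk{\abs{\vec\vartheta_o^{(1)}}}$ explicit and bound the partial geometric sum by the full tail $(d/2)^{\ell}/(1-d/2)$ rather than writing out the finite-sum formula.
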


\begin{proof}
    By Lemma \ref{lem:successive_theta}, for any $\ell \in \mathbb{N}$, we have
    \begin{align*}
        \Erw\brk{\abs{\vec \vartheta_o^{(\ell+1)} - \vec \vartheta_o^{(\ell)}}} \leq \bc{\frac{d}{2}}^{\ell} \Erw\brk{\abs{\vec \vartheta_{o}^{(1)} - \vec \vartheta_{o}^{(0)}}} =  \bc{\frac{d}{2}}^{\ell} \Erw\brk{\abs{\vec \vartheta_{o}^{(1)}}},
    \end{align*}
as $\vec \vartheta_o^{(0)}$=0. Thus, for arbitrary $\ell_1, \ell_2 \in \NN_0$ with $\ell_1 < \ell_2$, we have
\begin{align} \label{eq:distance}
    \Erw\brk{\abs{\vec \vartheta_o^{(\ell_2)} - \vec \vartheta_o^{(\ell_1)}}} &\leq  \sum_{\ell=\ell_1}^{\ell_2-1} \Erw\brk{\abs{\vec \vartheta^{(\ell+1)}_{o} - \vec \vartheta^{(\ell)}_{o}}} 
    \leq \Erw\brk{\abs{\vec \vartheta^{(1)}_{o}}}\sum_{\ell=\ell_1}^{\ell_2-1} \bc{\frac{d}{2}}^{\ell} \leq \bc{\frac{d}{2}}^{\ell_1} \frac{1 -\bc{d/2}^{\ell_2-\ell_1}}{1 - d/2} \Erw\brk{\abs{\vec \vartheta^{(1)}_{o}}}.
    \end{align}
As $d<2$, this shows that $(\vec \vartheta_o^{(\ell)})_{\ell \geq 1}$ is an $\cL^1$-Cauchy sequence. 
\end{proof}

As $\cL^1$ is complete, Corollary \ref{cor:convergence_theta1} implies that $(\vec \vartheta_o^{(\ell)})_{\ell \geq 0}$ converges in $\cL^1$. We denote the $\cL^1$-limit of $(\vec \vartheta_o^{(\ell)})_{\ell \geq 0}$ by $\vec \vartheta_o$. As a next step, we show that $(\vec \vartheta_o^{(\ell)})_{\ell \geq 0}$ converges also almost surely to $\vec \vartheta_o$.

\begin{corollary} \label{cor:convergence_theta2}
For any  $d \in (0,2)$, $(\vec \vartheta_o^{(\ell)})_{\ell \geq 0}$ converges almost surely to its $\cL^1$-limit $\vec \vartheta_o$.
\end{corollary}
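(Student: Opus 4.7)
The plan is to exploit the geometric rate of decay of successive $\cL^1$-distances provided by \Lem~\ref{lem:successive_theta} together with a standard Borel--Cantelli / absolute summability argument. The key point is that summable $\cL^1$-increments force almost sure absolute summability of increments, hence almost sure convergence.

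More concretely, iterating \Lem~\ref{lem:successive_theta} as in the proof of \Cor~\ref{cor:convergence_theta1} yields
\begin{align}
\Erw\brk{\abs{\vec\vartheta_o^{(\ell+1)}-\vec\vartheta_o^{(\ell)}}}\leq\bc{\frac d2}^{\ell}\Erw\brk{\abs{\vec\vartheta_o^{(1)}}}.
\end{align}
Since $d<2$ and $\Erw[|\vec\vartheta_o^{(1)}|]<\infty$ (this finiteness was already established in the induction inside the proof of \Lem~\ref{lem:successive_theta}), the sum $\sum_{\ell\ge 0}\Erw[|\vec\vartheta_o^{(\ell+1)}-\vec\vartheta_o^{(\ell)}|]$ is finite. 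By Tonelli's theorem,
\begin{align}
\Erw\brk{\sum_{\ell\ge 0}\abs{\vec\vartheta_o^{(\ell+1)}-\vec\vartheta_o^{(\ell)}}}=\sum_{\ell\ge 0}\Erw\brk{\abs{\vec\vartheta_o^{(\ell+1)}-\vec\vartheta_o^{(\ell)}}}<\infty,
\end{align}
so the random series $\sum_{\ell\ge 0}|\vec\vartheta_o^{(\ell+1)}-\vec\vartheta_o^{(\ell)}|$ is almost surely finite. Consequently, $(\vec\vartheta_o^{(\ell)})_{\ell\ge 0}$ is almost surely a Cauchy sequence in $\RR$ and therefore converges almost surely to some limit $\vec\vartheta_o^\star$. (An equivalent route is to apply Markov's inequality to get $\Pr(|\vec\vartheta_o^{(\ell+1)}-\vec\vartheta_o^{(\ell)}|>(d/2)^{\ell/4})\le(d/2)^{3\ell/4}\Erw[|\vec\vartheta_o^{(1)}|]$, which is summable in $\ell$, and then invoke the first Borel--Cantelli lemma; this is the route hinted at in the text.)

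Finally, to conclude that $\vec\vartheta_o^\star=\vec\vartheta_o$ almost surely, note that almost sure convergence implies convergence in probability, while by \Cor~\ref{cor:convergence_theta1} the sequence converges in $\cL^1$ to $\vec\vartheta_o$ and thus also in probability. Uniqueness of the limit in probability gives $\vec\vartheta_o^\star=\vec\vartheta_o$ almost surely, completing the proof.

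There is no real obstacle: the bulk of the work has already been done in \Lem~\ref{lem:successive_theta}, which supplies a geometric contraction factor $d/2<1$. The only point that needs a moment of care is the identification of the almost sure limit with the $\cL^1$-limit, which is handled via the uniqueness of limits in probability.
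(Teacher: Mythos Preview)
Your proof is correct and essentially matches the paper's approach: both exploit the geometric bound $\Erw[|\vec\vartheta_o^{(\ell+1)}-\vec\vartheta_o^{(\ell)}|]\le(d/2)^\ell\Erw[|\vec\vartheta_o^{(1)}|]$ from \Lem~\ref{lem:successive_theta}. The only cosmetic difference is that the paper first passes to the bound $\Erw[|\vec\vartheta_o^{(\ell)}-\vec\vartheta_o|]\le(d/2)^\ell\Erw[|\vec\vartheta_o^{(1)}|]/(1-d/2)$ and then applies Markov plus Borel--Cantelli directly to $|\vec\vartheta_o^{(\ell)}-\vec\vartheta_o|$, whereas your primary route uses Tonelli on the increments and then identifies the a.s.\ limit with the $\cL^1$-limit afterwards; your parenthetical Borel--Cantelli alternative is exactly the paper's argument.
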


\begin{proof}
  By considering the $\limsup$ as $\ell_2 \to \infty$ in \eqref{eq:distance}, we obtain the bound $\Erw[|\vec \vartheta_o^{(\ell)} -\vec \vartheta_o|] \leq (d/2)^{\ell} \Erw[|\vec \vartheta^{(1)}_{o}|]/(1-d/2)$ for $\ell \geq 0$. For $\eps>0$ arbitrary, Markov's inequality thus implies that
\begin{align*}
    \sum_{\ell=0}^{\infty} \pr\bc{\abs{\vec \vartheta_o^{(\ell)} -\vec \vartheta_o} \geq \eps} \leq  \sum_{\ell=0}^{\infty} \frac{\Erw\brk{\abs{\vec \vartheta_o^{(\ell)} -\vec \vartheta_o}}}{\eps} \leq \frac{\Erw\brk{\abs{\vec \vartheta_o^{(1)}}}}{\eps (1-d/2)}  \sum_{\ell=0}^{\infty} \bc{\frac{d}{2}}^{\ell} < \infty.
    \end{align*}
By the first Borel-Cantelli lemma $|\vec \vartheta_o^{(\ell)}-\vec\vartheta_o|\geq \eps$ only finitely often. As a consequence, $(\vec \vartheta_o^{(\ell)})_{\ell \geq 0}$  converges almost surely to $\vec \vartheta_o$. 
\end{proof}

\begin{corollary} \label{Lem:LL_dist}
  For any $d \in (0,2)$, $\vec \vartheta_o$ satisfies the almost sure relation
   \begin{align}\label{eqProp_asconvergence3}
\vec \vartheta_o =  \sum_{j=1}^{\vec d_o}(- \vec s_j) \cdot \log \frac{1 + \vec s_j' \tanh(\vec \vartheta_{o_j}/2) }{2}.
\end{align} 
Moreover, $\vec \vartheta_o$ has distribution $\rho_d$, where $\rho_d$ is the unique fixed point of $\LDE_d$ in $(\cW_2(\RR), W_2)$ from Lemma~\ref{Lemma_BP1}. 
\end{corollary}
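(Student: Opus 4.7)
\emph{Proof proposal for Corollary \ref{Lem:LL_dist}.} The plan is to pass to the limit $\ell\to\infty$ in the one-step recursion \eqref{eq:BPinGW} for the root, and then identify the distribution of the limit via the contraction Lemma \ref{Lemma_BP1}.

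\textbf{Step 1 (the a.s.\ recursion).} For every $\ell\geq 1$, the recursion \eqref{eq:BPinGW} applied at the root, together with the definitions $\vs_i=\sign(o,\va_{o_i})$ and $\vs_i'=\sign(o_i,\va_{o_i})$, reads
\[
\vec\vartheta_o^{(\ell)}=\sum_{i=1}^{\vd_o}(-\vs_i)\,\log\frac{1+\vs_i'\tanh(\vec\vartheta_{o_i}^{(\ell-1)}/2)}{2}.
\]
I would apply Corollary \ref{cor:convergence_theta2} separately to each of the countably many subtrees $\vT_j$ (which by Fact \ref{fact:subtrees} are i.i.d.\ copies of $\vT$, so the proof of that corollary applies verbatim) to obtain, for each fixed $j\geq 1$, almost sure convergence $\vec\vartheta_{o_j}^{(\ell)}\to\vec\vartheta_{o_j}$ as $\ell\to\infty$. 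Taking the countable union of the corresponding null sets yields one almost sure event on which this convergence holds simultaneously for all $j\geq 1$. On this event, since $\vd_o<\infty$ a.s.\ and the map $x\mapsto\log\tfrac{1+s\tanh(x/2)}{2}$ is continuous for each fixed $s\in\{-1,1\}$, I can pass to the limit term by term in the finite sum above, producing \eqref{eqProp_asconvergence3}.

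\textbf{Step 2 (identifying the distribution).} Here I exploit that under the Galton--Watson law, $\vd_o\sim\Po(d)$, the $(\vs_i)$ are i.i.d.\ Rademacher, the $(\vs_i')$ are i.i.d.\ Rademacher, and all of these are independent of $(\vec\vartheta_{o_i}^{(\ell-1)})_{i\geq 1}$, which are i.i.d.\ copies of $\vec\vartheta_o^{(\ell-1)}$ by Fact \ref{fact:subtrees}. Since $-\vs_i\stackrel{d}{=}\vs_i$, matching these ingredients against the definition \eqref{Eq_BTreeOperator} of $\LDE_d$ shows
\[
\text{law}(\vec\vartheta_o^{(\ell)})=\LDE_d\bigl(\text{law}(\vec\vartheta_o^{(\ell-1)})\bigr)\qquad\text{for every }\ell\geq 1.
\]
Iterating and noting that $\vec\vartheta_o^{(0)}=0$ gives $\text{law}(\vec\vartheta_o^{(\ell)})=\LDE_d^{(\ell)}(\delta_0)$. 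Since $\delta_0\in\cW_2(\RR)$, Lemma \ref{Lemma_BP1} provides $W_2\bigl(\LDE_d^{(\ell)}(\delta_0),\rho_d\bigr)\to 0$, hence $\text{law}(\vec\vartheta_o^{(\ell)})\Rightarrow\rho_d$ weakly. On the other hand, the $\cL^1$-convergence $\vec\vartheta_o^{(\ell)}\to\vec\vartheta_o$ from Corollary \ref{cor:convergence_theta1} forces weak convergence to the law of $\vec\vartheta_o$. By uniqueness of weak limits I conclude $\vec\vartheta_o\sim\rho_d$.

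\textbf{Anticipated obstacle.} Neither step is technically deep, but the one point that requires a little care is that the recursion \eqref{eqProp_asconvergence3} is a single a.s.\ identity involving \emph{all} of the random variables $(\vec\vartheta_{o_j})_{j\geq 1}$ at once. The safe way to guarantee this is to fix, once and for all, the i.i.d.\ sequence of subtrees $(\vT_j)_{j\geq 1}$ and apply Corollary \ref{cor:convergence_theta2} to each of them before ever looking at $\vd_o$; the random truncation to $i\leq\vd_o$ then poses no measurability issue. Once this is in place, everything else is a direct consequence of Lemma \ref{Lemma_BP1} and uniqueness of weak limits.
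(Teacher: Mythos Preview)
Your proof is correct and follows essentially the same approach as the paper: pass to the limit in the recursion \eqref{eq:BPinGW} at the root using almost sure convergence, then identify the law via the iteration $\rho^{(\ell)}=\LDE_d(\rho^{(\ell-1)})$ starting from $\delta_0$ and Lemma~\ref{Lemma_BP1}. If anything, your Step~1 is more careful than the paper's, which simply invokes Corollary~\ref{cor:convergence_theta2} without explicitly noting that one needs the almost sure convergence of each $\vec\vartheta_{o_j}^{(\ell)}$ to take the limit inside the sum.
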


\begin{proof}
    Taking the limit $\ell \to \infty$ in \eqref{eq:BPinGW} for $y=o$ and using the almost sure convergence from Corollary~\ref{cor:convergence_theta2} yields the almost sure relation \eqref{eqProp_asconvergence3}. To identify the distribution of $\vec \vartheta_o$, let $\rho^{(\ell)}$ denote the distribution of $\vec \vartheta_o^{(\ell)}$ for $\ell \geq 0$, and observe that $\vec \vartheta_o^{(0)} = 0 \in \cL^2$. The distribution $\rho^{(\ell)}$ for $\ell \geq 1$ is specified in terms of $\rho^{(\ell-1)}$ as the distribution of the random variable in  \eqref{eq:BPinGW} (again with $y=o$). However, 
    \[ -\sum_{i=1}^{\vd_o}\sign(o,\vec a_{o_i})\log\frac{1+\sign(o_i,\vec a_{o_i})\tanh(\vec\vartheta^{(\ell-1)}_{o_i}/2)}{2} \quad \disteq \quad \sum_{i=1}^{\vd_o}\sign(o,\vec a_{o_i})\log\frac{1+\sign(o_i,\vec a_{o_i})\tanh(\vec\vartheta^{(\ell-1)}_{o_i}/2)}{2},\]
where the distribution of the random variable on the right hand side is $\LDE_d(\rho^{(\ell-1)})$. We thus obtain that $\rho^{(0)} = \delta_0$, $\rho^{(\ell)} = \LDE_d(\rho^{(\ell-1)})$. By   Lemma~\ref{Lemma_BP1}, this sequence of distributions converges to $\rho_d$, from which we obtain that $\vec\vartheta_o \disteq \rho_d$.
\end{proof}
We will revisit the relation \eqref{eqProp_asconvergence3} in Section~\ref{sec:lem_atoms2} during the proof of Proposition~\ref{lem_atoms2}. 

\subsubsection{Proof of Proposition \ref{Prop_asconvergence}}
Ad i): As $( \mu_{\vT^{(2\ell)}}(\SIGMA_o=1) )_{\ell\geq 0} = (\psi(\vec \vartheta_o^{(\ell)}))_{\ell \geq 0}$ with $\psi$ from \eqref{eqll}, almost sure convergence of $( \mu_{\vT^{(2\ell)}}(\SIGMA_o=1) )_{\ell\geq 0}$ immediately follows from Corollary \ref{cor:convergence_theta2}. We write $\MU_o=\psi(\vec \vartheta_o)$ for the almost sure limit. 
    $\cL^1$-convergence of $( \mu_{\vT^{(2\ell)}}(\SIGMA_o=1) )_{\ell\geq 0}$ on the other hand follows from Corollary \ref{cor:convergence_theta1} and the fact that for all $z_1, z_2 \in \mathbb R$ there exists $\xi = \xi(z_1, z_2) \in \RR$ such that
    \[|\psi(z_1) - \psi(z_2)| = |\psi'(\xi)| |z_1 - z_2| \leq \frac{1}{2} |z_1 - z_2|.\]
The same line of reasoning applies to $( \mu_{\vT_j^{(2\ell)}}(\SIGMA_{o_j}=1) )_{\ell\geq 0}$ for $j \geq 1$, in combination with the fact that we only have countably many such random variables.

Ad ii): The stated independence follows immediately from the construction, using independence of the subtrees $(\vT_j)_{j \geq 1}$ as well as the clauses $(\vec a_{o_j})$ and the Poisson variable $\vd$.

Ad iii): 
Again, by the recursive nature of $2$-SAT marginals on finite trees~\eqref{eqBPrec} for any $\ell \geq 2$, we have the almost sure relation
\begin{align}\label{eq:BPinGW1}
\mu_{\vT^{(2\ell)}}(\SIGMA_o =1)  &= \frac{\prod_{i \in \vec D_{-}}\mu_{\vT_i^{(2\ell-2)}}(\SIGMA_{o_i} = \sign(o_i,\vec a_{o_i}))  }{\prod_{i \in \vec D_{-}}\mu_{\vT_i^{(2\ell-2)}}(\SIGMA_{o_i} = \sign(o_i, \vec a_{o_i}))  + \prod_{i \in \vec D_{+}}\mu_{\vT_i^{(2\ell-2)}}(\SIGMA_{o_i} = \sign(o_i, \vec a_{o_i}))}.
\end{align}
Expressing the right hand side of \eqref{eq:BPinGW1} in terms of $\mu_{\vT_i^{(2\ell-2)}}(\SIGMA_{o_i} = 1)$,  the right hand side of \eqref{eq:BPinGW1} reduces to
\begin{align}\label{eq:BPinGW2}
\frac{\prod_{i \in \vec D_{-}}\mu_{\vT_i^{(2\ell-2)}}(\SIGMA_{o_i} = 1)^{\frac{1+ \vec s_i'}{2}} (1-\mu_{\vT_i^{(2\ell-2)}}(\SIGMA_{o_i} = 1))^{\frac{1-\vec s_i'}{2}}}{\prod_{i \in \vec D_{-}} \mu_{\vT_i^{(2\ell-2)}}(\SIGMA_{o_i} = 1)^{\frac{1+ \vec s_i'}{2}} (1-\mu_{\vT_i^{(2\ell-2)}}(\SIGMA_{o_i} = 1))^{\frac{1-\vec s_i'}{2}} + \prod_{i \in \vec D_{+}}\mu_{\vT_i^{(2\ell-2)}}(\SIGMA_{o_i} = 1)^{\frac{1+ \vec s_i'}{2}} (1-\mu_{\vT_i^{(2\ell-2)}}(\SIGMA_{o_i} = 1))^{\frac{1-\vec s_i'}{2}}}.
\end{align}
Taking the limit $\ell \to \infty$ in \eqref{eq:BPinGW2}, together with the almost sure convergence of all involved random variables, yields \eqref{eqProp_asconvergence2}. Moreover, by Fact \ref{fact:subtrees}, all random variables $\MU_o, (\MU_{o_j})_{j \geq 1}$, are identically distributed. Finally, as $\MU_o = \psi(\vec\vartheta_o)$, Corollary~\ref{Lem:LL_dist} together with \eqref{eq:pi_from_rho} imply that $\MU_o$ has distribution $\psi(\rho_d) = \pi_d$.

\section{Proof of Proposition \ref{lem_atoms1}} \label{sec:lem_atoms1}

We prove Proposition \ref{lem_atoms1} by a closer inspection of \textit{finite (variable-)rooted} $2$-SAT tree factor graphs $T$ as considered in Section~\ref{subsec:BP_on_trees}. The proof strategy is the following: First, observe that any such tree $T$ has positive probability under the law of the Galton-Watson tree $\vT$, i.e. $\pr(\vT=T) >0$. 
Thanks to Proposition \ref{Prop_asconvergence}, on the event $\{\vT = T\}$, the random variable $\MU_o \sim \pi_d$ is nothing but the root marginal $\mu_T(\SIGMA_o=1)$ of $T$. Secondly, this marginal clearly is a rational number. To argue the converse, namely that every rational number in $(0,1)$ occurs as the root marginal of a finite rooted factor tree, in Subsection \ref{subsec:finite_trees} we make use of the recursive structure of marginals on such trees (see Subsection \ref{subsec:BP_on_trees}). In Subsection \ref{subsec:proof_atoms1} we combine these two observations to conclude the proof of Proposition \ref{lem_atoms1}.

\subsection{Finite rooted trees} \label{subsec:finite_trees}

Let
\begin{align*}
    \mathbb A_{\mathrm{T}} := \cbc{\mu_{T}(\SIGMA_o = 1): T \text{ is a finite rooted tree with root } o}
\end{align*}
be the set of root marginals of $2$-SAT formulae encoded by finite rooted tree factor graphs. As a preparation to Proposition~\ref{prop:marginals_of_trees}, we first prove two small lemmata.

\begin{lemma}\label{lem:impl1}
    $q \in \mathbb A_{\mathrm{T}} \Longrightarrow 1-q \in \mathbb A_{\mathrm{T}}$.
\end{lemma}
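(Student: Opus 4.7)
The plan is to exhibit a simple transformation of rooted trees that replaces the root marginal $q$ by $1-q$. Given a finite rooted tree factor graph $T$ with root variable $o$ satisfying $\mu_T(\SIGMA_o = 1) = q$, I will construct $T'$ from $T$ by flipping the edge-label $\mathrm{s}(o,a)$ for every clause $a\in\partial o$, while leaving every other edge, variable and clause of $T$ untouched. This corresponds to substituting $\neg x_o$ for $x_o$ everywhere in the underlying formula. Since $T$ is a tree and $o$ has no other occurrences in the formula, the factor graph $T'$ is again a finite rooted tree with root $o$.

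The key step is the following symmetry: define a map $S(T)\to S(T')$, $\sigma\mapsto \sigma'$, by setting $\sigma'_o = -\sigma_o$ and $\sigma'_v = \sigma_v$ for every variable $v\neq o$. Clauses $a\notin\partial o$ are identical in $T$ and $T'$ and do not involve $o$, so they are satisfied by $\sigma'$ iff they are satisfied by $\sigma$. For a clause $a\in\partial o$ with second variable $y_a$, satisfaction by $\sigma$ reads $\mathrm{s}(o,a)\sigma_o = 1$ or $\mathrm{s}(y_a,a)\sigma_{y_a}=1$; after the double flip of both $\mathrm{s}(o,a)$ and $\sigma_o$, this is exactly satisfaction of $a$ in $T'$ by $\sigma'$. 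Thus the map is a bijection, and it sends $\{\sigma:\sigma_o=1\}\cap S(T)$ bijectively onto $\{\sigma':\sigma'_o=-1\}\cap S(T')$. Consequently
\begin{equation*}
\mu_{T'}(\SIGMA_o=1) \;=\; \mu_T(\SIGMA_o=-1) \;=\; 1-\mu_T(\SIGMA_o=1) \;=\; 1-q,
\end{equation*}
which shows $1-q\in\mathbb{A}_{\mathrm T}$. Alternatively, one can read off the same conclusion directly from the belief-propagation recursion \eqref{eqBPrec}: replacing every $\mathrm{s}(o,a)$ by $-\mathrm{s}(o,a)$ exchanges the two products appearing in the denominator (and makes the numerator equal to the former second summand of the denominator), so the expression for $\mu_T(\SIGMA_o=1)$ is transformed into $1$ minus itself.

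There is no real obstacle here; the only point that requires a line of verification is that the involution $\sigma\leftrightarrow\sigma'$ genuinely maps satisfying assignments of $T$ to satisfying assignments of $T'$, which follows from the elementary sign computation above. This lemma will be combined in the next step with an analogous operation for tensoring/attaching subtrees at the root to build arbitrary rational marginals out of a basic building block.
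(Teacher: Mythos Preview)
Your proof is correct and follows essentially the same approach as the paper's: both construct $T'$ via a sign-flip and exhibit a bijection between $S(T)$ and $S(T')$ that negates the root. The only cosmetic difference is that the paper flips \emph{all} edge labels and negates the entire assignment, whereas you flip only the root-incident labels and negate only $\sigma_o$; either symmetry yields $\mu_{T'}(\SIGMA_o=1)=1-q$.
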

\begin{proof}
    Assume that $q \in \mathbb A_{\mathrm{T}}$ and let $T$ be a finite factor tree with root $o$ and $\mu_{T}(\SIGMA_o = 1) = q$. Define $T'$ to be the finite factor tree that has the same graph structure as $T$, but where the original edge labels of $T$ have been multiplied by $-1$. Then $S(T') = \{- \sigma: \sigma \in S(T)\}$, and in particular both formulas have the same number of solutions. Correspondingly the proportion of solutions of $T'$ where the root is set to $1$ is identical to the proportion of solutions of $T'$ where the root is set to $-1$. In other words, $\mu_{T'}(\SIGMA_o=1) = \mu_{T}(\SIGMA_o=-1) = 1 - \mu_{T}(\SIGMA_o=1) = 1-q$.
\end{proof}

\begin{lemma}\label{lem:impl2}
    $p,q \in \mathbb A_{\mathrm{T}} \Longrightarrow \frac{p}{p+q} \in \mathbb A_{\mathrm{T}}$. 
\end{lemma}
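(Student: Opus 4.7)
The plan is to glue the two witnessing trees into a single new rooted factor tree via a freshly introduced root $o$, and then to read off $p/(p+q)$ from the finite-tree BP recursion~\eqref{eqBPrec} at $o$. Let $T_p, T_q$ be finite rooted factor trees with roots $o_p, o_q$ witnessing $p, q \in \mathbb{A}_{\mathrm{T}}$. I would introduce a new variable vertex $o$ together with two fresh clause vertices $a$ and $b$, and form the factor graph $T$ by taking disjoint copies of $T_p$ and $T_q$, adding $o$, and inserting the four edges $\{o,a\}$, $\{a,o_p\}$, $\{o,b\}$, $\{b,o_q\}$ with labels $\sign(o,a) = -1$, $\sign(o_p,a) = +1$, $\sign(o,b) = +1$ and $\sign(o_q,b) = +1$, respectively. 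Since $T_p$ and $T_q$ are joined only through the new vertex $o$, the graph $T$ is again a finite rooted factor tree with root $o$ and $\partial o = \{a,b\}$.

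The conclusion would then follow from a single application of~\eqref{eqBPrec} at $o$: by construction, the sub-formulas $T_{o_p \to a}$ and $T_{o_q \to b}$ obtained by deleting the edges $\{o_p,a\}$ and $\{o_q,b\}$ coincide with $T_p$ and $T_q$, so their root marginals $\mu_{T_{o_p\to a}}(\SIGMA_{o_p}=1)$ and $\mu_{T_{o_q\to b}}(\SIGMA_{o_q}=1)$ equal $p$ and $q$, respectively. The signs are arranged so that the indicator $\vecone\{\sign(o,a)=-1\}$ at clause $a$ contributes the factor $1-(1-p)=p$ to the numerator and the factor $1$ to the mirror product, while the indicator $\vecone\{\sign(o,b)=1\}$ at clause $b$ contributes the factor $1$ to the numerator and $1-(1-q)=q$ to the mirror product. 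Multiplying these contributions then yields $\mu_T(\SIGMA_o = 1) = p/(p+q)$, establishing $p/(p+q) \in \mathbb{A}_{\mathrm{T}}$.

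I do not foresee any serious obstacle: the entire lemma reduces to a single application of the BP recursion once the tree has been assembled, and the only point that requires real care is the choice of the four signs on the two newly added clauses, which is tailored precisely so that the $-1$-label at $o$ places the factor $p$ in the numerator of~\eqref{eqBPrec} and the $+1$-label at $o$ places the factor $q$ in the mirror product. One may implicitly assume $p,q>0$, in which case $p+q>0$ and the quotient is well defined; this suffices for the downstream use of the lemma in Subsection~\ref{subsec:proof_atoms1}, where only rationals in $(0,1)$ need to be produced.
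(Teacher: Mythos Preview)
Your proposal is correct and essentially identical to the paper's own proof: both attach the two witnessing trees to a fresh root $o$ via two new clauses with sign pattern $(-,+)$ on the $p$-side and $(+,+)$ on the $q$-side, and then read off $p/(p+q)$ from a single application of~\eqref{eqBPrec}. The only difference is cosmetic---the paper uses the $(s_1,s_2)$ clause-type shorthand where you spell out the four edge labels explicitly---and your worry about $p,q>0$ is moot since $\mathbb{A}_{\mathrm T}\subseteq(0,1)$ by the first paragraph of the proof of Proposition~\ref{prop:marginals_of_trees}.
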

\begin{proof}
Assume that $p,q \in \mathbb A_{\mathrm{T}}$ and let $T_1, T_2$ be two finite factor trees with roots $o_1, o_2$ and $\mu_{T_1}(\SIGMA_{o_1}=1) =p$ and $\mu_{T_2}(\SIGMA_{o_2}=1) =q$. Consider the tree $T$ that is built from $T_1$ and $T_2$ by joining them through a new root $o$ as follows. The root $o$ has exactly one $(-,+)$ child and one $(+,+)$ child. At the $(-,+)$ child, we append the tree $T_1$ at $o_1$. At the $(+,+)$ child, we append the tree $T_2$ at $o_2$. According to~\eqref{eqBPrec},  $\mu_T(\SIGMA_o=1) = \frac{p}{p+q}$.
\end{proof}

We are now in the position to show that every rational number occurs as the root marginal of a rooted 2-SAT factor tree.

\begin{proposition}\label{prop:marginals_of_trees}
    We have
    \[\mathbb A_{\mathrm{T}} = \mathbb{Q} \cap (0,1).\]

\end{proposition}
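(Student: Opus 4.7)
The plan is to establish the two inclusions defining $\mathbb A_{\mathrm T}$ separately. The direction $\mathbb A_{\mathrm T} \subseteq \mathbb Q \cap (0,1)$ is immediate from the definition: for any finite rooted tree factor graph $T$, $\mu_{T}(\SIGMA_o = 1)$ is a ratio of two positive integers (the count of satisfying assignments with $\sigma_o = 1$ over the total count), hence rational; positivity of both counts follows because on a tree, any prescribed value of the root can be extended to a satisfying assignment by propagating constraints from $o$ outward.

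For the reverse inclusion $\mathbb Q \cap (0,1) \subseteq \mathbb A_{\mathrm T}$, I would argue by strong induction on the denominator $b$ of $q=a/b$ written in lowest terms. The base case $b=2$ is $q=1/2$, realised by the trivial rooted tree consisting of the root alone. The decisive algebraic identity driving the inductive step is
\begin{align*}
\frac{a}{b} \;=\; \frac{1/(b-a)}{1/(b-a) + 1/a},
\end{align*}
which combined with Lemma~\ref{lem:impl2} reduces membership of $a/b$ in $\mathbb A_{\mathrm T}$ to membership of $1/a$ and $1/(b-a)$. Since $\gcd(a,b)=1$ forces $\gcd(a,b-a)=1$, both auxiliary fractions are already in lowest terms with denominators $a$ and $b-a$, which are strictly smaller than $b$ whenever $2 \le a \le b-2$. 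In this generic case the inductive hypothesis applies directly, and Lemma~\ref{lem:impl2} delivers $a/b \in \mathbb A_{\mathrm T}$.

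The boundary cases $a\in\{1,b-1\}$ require one extra ingredient: the implication $p \in \mathbb A_{\mathrm T} \Rightarrow p/(p+1) \in \mathbb A_{\mathrm T}$. I would prove this by exactly the construction of Lemma~\ref{lem:impl2} but with only a single $(-,+)$-child appended to a tree realising $p$; the belief propagation recursion~\eqref{eqBPrec} then evaluates to $\mu_T(\SIGMA_o=1) = p/(p+1)$. A one-line induction on $n$, using $1/n = (1/(n-1))/((1/(n-1))+1)$, subsequently yields $1/n \in \mathbb A_{\mathrm T}$ for every $n \geq 2$, settling the case $a=1$; the case $a=b-1$ then follows from Lemma~\ref{lem:impl1}.

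The main obstacle I anticipate is purely combinatorial: spotting the decomposition $a/b = (1/(b-a))/((1/(b-a))+1/a)$, which is what makes the induction on denominators go through. Iterating Lemma~\ref{lem:impl2} naively from $\{1/2\}$ alone only produces fractions of the form $2^{k}/(2^{k}+2^{\ell})$; it is the auxiliary family $\{1/n\}_{n\geq 2}$, supplied by the $p/(p+1)$ construction, that combines with this identity to cover all of $\mathbb Q \cap (0,1)$. Once the identity and the $1/n$ subclaim are in hand, the remainder is a routine bookkeeping exercise on the inductive hypothesis.
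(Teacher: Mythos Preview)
Your proof is correct and uses the same toolkit as the paper (Lemmas~\ref{lem:impl1} and~\ref{lem:impl2}, together with the $p \mapsto p/(p+1)$ construction that generates the family $\{1/n\}_{n\ge2}$), but the inductive organisation is genuinely different. The paper runs a \emph{nested double induction}---outer on the numerator $r$, inner on the denominator $k$---with step identity $(r+1)/(k+1) = \tfrac{(r+1)/k}{(r+1)/k + (k-r)/k}$, which draws simultaneously on the inner hypothesis (for $(r+1)/k$) and, via Lemma~\ref{lem:impl1}, on the outer hypothesis (for $(k-r)/k$). Your identity $a/b = \tfrac{1/(b-a)}{1/(b-a) + 1/a}$ is more economical: once the $\{1/n\}$ family is in hand, every $a/b$ with $2 \le a \le b-2$ follows from a \emph{single} application of Lemma~\ref{lem:impl2} to two unit fractions, so the ``strong induction on $b$'' you announce is in fact not needed beyond the boundary cases $a\in\{1,b-1\}$. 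This makes your route shorter and avoids the nesting. A minor remark: the observation $\gcd(a,b)=1 \Rightarrow \gcd(a,b-a)=1$ is superfluous here, since unit fractions $1/a$ and $1/(b-a)$ are automatically in lowest terms; it does no harm, but you can drop it.
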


\begin{proof}
First, observe that $\mathbb A_{\mathrm{T}} \subseteq \mathbb{Q} \cap [0,1]$, as for any finite factor tree $T$ with root $o$, $\mu_{T}(\SIGMA_o = 1)$ is nothing but the fraction of solutions $\sigma$ of the finite tree formula $T$ in which the root is set to $1$. Moreover, it cannot happen that all solutions $\sigma$ have $\sigma_o=1$ (or $\sigma_o = -1$), as assigning the value $-1$ to $o$ can always be completed to a valid solution of $T$ by tracing the implications down the tree. Thus, $\mathbb A_{\mathrm{T}} \subseteq \mathbb{Q} \cap (0,1)$.

The basic proof idea for the opposite inclusion $\mathbb A_{\mathrm{T}} \supseteq \mathbb{Q} \cap (0,1)$ is to build deeper and deeper trees with more complex root marginals, using smaller subtrees along with the recursive computation of marginals that is possible in trees. Using Lemmata~\ref{lem:impl1} and~\ref{lem:impl2}, we can now proceed by means of a nested induction.

\textbf{Induction beginning (\textbf{I.B.}):} We show that $\{\frac{1}{k}: k >1\} \subseteq \mathbb A_{\mathrm{T}}$ by induction on $k \geq 2$. \\
\textit{Induction beginning: $1/2 \in \mathbb{A}_T$}. Let $T$ be the tree of height $0$ that just consists of an isolated root variable $o$. We then have $\mu_T(\SIGMA_o=1)= \frac{1}{2}$, so $\frac{1}{2} \in \mathbb A_{\mathrm{T}}$.\\
\textit{Induction step: $1/k \in \mathbb{A}_T\Rightarrow 1/(k+1) \in \mathbb{A}_T$}. Assume that $1/k \in \mathbb A_{\mathrm{T}}$ and let $T$ be a finite rooted factor tree with root $o$ such that $1/k = \mu_T(\SIGMA_o=1)$. Consider the tree $T'$ that is built from $T$ as follows: At level zero, there is the root variable $o'$. It has exactly one $(-,+)$ child. At the $(-,+)$ child, we append the tree $T$ at $o$.  According to~\eqref{eqBPrec},
\[ \mu_{T'}(\SIGMA_o=1) = \frac{\frac{1}{k}}{\frac{1}{k} + 1} = \frac{1}{k+1}.\]
Thus, $\frac{1}{k+1} \in  \mathbb A_{\mathrm{T}}$. This concludes the proof of the outer induction beginning \textbf{I.B.}.

\textbf{Induction step.} We next show that for any fixed $r \geq 1$, $\{\frac{r}{k}: k > r\} \subseteq \mathbb A_{\mathrm{T}}$ implies $\{\frac{r+1}{k}: k > r+1\} \subseteq \mathbb A_{\mathrm{T}}$.\\
Therefore, the \textbf{induction hypothesis (I.H.)} is that for some $r \geq 1$, $\{\frac{r}{k}: k > r\} \subseteq \mathbb A_{\mathrm{T}}$ (\textbf{I.H.}). We will show that $\{\frac{r+1}{k}: k > r+1\} \subseteq \mathbb A_{\mathrm{T}}$ by induction on $k > r+1$.\\ 
\textit{Induction beginning: $k=r+2$}. As $\{\frac{1}{k}: k >1\} \subseteq \mathbb A_{\mathrm{T}}$ according to the (outer) induction beginning \textbf{I.B.}, $1/(r+2) \in \mathbb{A}_{\mathrm{T}}$. This implies that $(r+1)/(r+2) = 1 - 1/(r+2)\in \mathbb{A}_{\mathrm{T}}$, using Lemma~\ref{lem:impl1}. \\
\textit{Induction step: $(r+1)/k \in \mathbb{A}_T\Rightarrow (r+1)/(k+1) \in \mathbb{A}_T$}. Assume that $(r+1)/k \in \mathbb A_{\mathrm{T}}$ for some $k \geq r+2$. By the (outer) induction hypothesis \textbf{I.H.}, $r/k \in \mathbb{A}_{\mathrm{T}}$. As a consequence of Lemma~\ref{lem:impl1}, also $(k-r)/k \in \mathbb A_{\mathrm{T}}$. Now Lemma~\ref{lem:impl2} implies that  
\[\frac{r+1}{k+1} = \frac{\frac{r+1}{k}}{\frac{r+1}{k} + \frac{k-r}{k}} \in \mathbb A_{\mathrm{T}}.\]
This concludes the induction, which in turn clearly shows that any rational number in $(0,1)$ is contained in $\mathbb{A}_T$.
\end{proof}

\subsection{{Proof of Proposition \ref{lem_atoms1}}} \label{subsec:proof_atoms1}
We now have all our ducks in a row. Let $q \in \mathbb Q \cap (0,1)$. According to  Proposition \ref{prop:marginals_of_trees}, $q = \mu_T(\SIGMA_o=1)$ for some finite 2-SAT factor tree $T$ with root $o$. Moreover $\pr(\vT=T)>0$. On the event $\{\vT = T\}$, $\MU_o = \mu_T(\SIGMA_o=1) = q$. Thus $\pi_d(\{q\}) \geq \pr(\vT=T) > 0$, from which we conclude that $\mathbb Q \cap (0,1) \subseteq \mathbb A_{\mathrm{p.p.}}$.

\section{{Proof of Proposition~\ref{lem_atoms2}}}  \label{sec:lem_atoms2}
To prove the inclusion $\mathbb{A}_{\mathrm{p.p.}} \subseteq \mathbb Q \cap (0,1)$ as well as the existence of a non-trivial continuous part in $\pi_d$ for $d \in (1,2)$, in Section \ref{Sec:Inf_trees_no_atoms} we analyse the distribution of the random variable $\MU_o$ from  Proposition~\ref{Prop_asconvergence} on the event $\{|\vT| = \infty\}$, which has non-zero probability for $d \in (1,2)$. As finite trees cannot give rise to a continuous distribution, this is a natural candidate for the continuous part of $\pi_d$, and in fact, Lemma~\ref{lem:inf_trees_noatoms} below shows that this part is purely continuous. In Section \ref{Sec:Supp_inf}, we will then use a similar line of argument to show that the support of the continuous part $\pi_{d,\mathrm{c}}$ is $[0,1]$. The proof of Proposition~\ref{lem_atoms2} is given in the final Section~\ref{Sec:Proof_atoms2}.

\subsection{Infinite trees have no atoms} \label{Sec:Inf_trees_no_atoms}
The main step in the proof of Proposition~\ref{lem_atoms2} is Lemma~\ref{lem:inf_trees_noatoms}, which we state and prove in this section.

\begin{lemma}\label{lem:inf_trees_noatoms}
    For any $d \in (1,2)$ and $x \in (0,1)$,
    \[\pr\bc{\MU_o = x \vert |\vT|= \infty} = 0.\]
\end{lemma}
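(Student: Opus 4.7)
The plan is to reformulate the statement in terms of log-likelihood ratios. Set $\nu_\infty := \cL(\vec\vartheta_o \mid |\vT|=\infty)$ and $M := \sup_{\theta \in \RR}\nu_\infty(\{\theta\})$; since $\MU_o = \psi(\vec\vartheta_o)$ by \eqref{eq-mu-psi-vtheta} and $\psi$ from \eqref{eqll} is a homeomorphism $\RR \to (0,1)$, it is enough to prove $M = 0$. To this end I would derive a self-improving bound $M \leq M/2$ from the almost sure recursion \eqref{eqProp_asconvergence3} and the ``smoothing'' effect of the root clause signs $\vs_j, \vs_j'$. Setting $p := \pr(|\vT| = \infty) > 0$ and $\rho_F := \cL(\vec\vartheta_o \mid |\vT| < \infty)$, and using that each subtree $\vT_j$ is an independent copy of $\vT$, Poisson thinning of $\vd_o \sim \Po(d)$ gives that conditional on $|\vT| = \infty$ the root has independent counts $\vec K \sim \Po(dp)$ of infinite subtrees and $\vec K' \sim \Po(d(1-p))$ of finite subtrees, restricted to $\vec K \geq 1$. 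Combining Proposition~\ref{Prop_asconvergence}.ii) with \eqref{eqProp_asconvergence3} and the independence of the signs at the root from the subtree marginals, for $k \geq 1, k' \geq 0$
\[
\vec\vartheta_o \,\Big|\, \{|\vT|=\infty, \vec K=k, \vec K'=k'\} \;\disteq\; \sum_{i=1}^{k} A_i + \sum_{j=1}^{k'} B_j,
\]
with $A_i$ i.i.d.\ of law $\sigma_I := T(\nu_\infty)$, $B_j$ i.i.d.\ of law $\sigma_F := T(\rho_F)$, all mutually independent, where $T(\mu)$ denotes the pushforward of $\mu$ by $\vartheta \mapsto -\vs\log((1+\vs'\tanh(\vartheta/2))/2)$ with independent Rademacher signs $\vs, \vs'$.

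The key ingredient is the smoothing bound $\sup_v T(\mu)(\{v\}) \leq \tfrac{1}{2}\sup_\theta \mu(\{\theta\})$ for every $\mu \in \cP(\RR)$. Indeed, for each $(s,s') \in \{\pm 1\}^2$ the map $f_{s,s'}(\theta) := -s\log((1+s'\tanh(\theta/2))/2)$ is a continuous bijection from $\RR$ onto $(0,\infty)$ if $s = +1$ and onto $(-\infty,0)$ if $s = -1$, and in particular never attains $0$. Hence for any $v \in \RR$ the preimage $f^{-1}_{s,s'}(v)$ is nonempty for at most $2$ of the $4$ sign choices, whence
\[
T(\mu)(\{v\}) \,=\, \tfrac{1}{4}\sum_{(s,s')}\mu(\{f^{-1}_{s,s'}(v)\}) \,\leq\, \tfrac{1}{2}\,\sup_\theta \mu(\{\theta\}),
\]
with the convention that empty preimages contribute $0$. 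Applied to $\nu_\infty$ this gives $\sup_v \sigma_I(\{v\}) \leq M/2$. For any $k \geq 1$, $k' \geq 0$ and $v \in \RR$, conditioning on $A_1$, which is independent of $Z := \sum_{i \geq 2}A_i + \sum_j B_j$, then yields
\[
(\sigma_I^{*k} * \sigma_F^{*k'})(\{v\}) \,=\, \int_\RR \sigma_I(\{v-z\})\,\dd\mu_Z(z) \,\leq\, \sup_w \sigma_I(\{w\}) \,\leq\, M/2,
\]
and averaging over the conditional distribution of $(\vec K, \vec K')$ from the previous paragraph yields $\nu_\infty(\{v\}) \leq M/2$ for every $v \in \RR$. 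Taking the supremum forces $M \leq M/2$, and since $M \leq 1$ this gives $M = 0$.

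The main obstacle is identifying the right functional to iterate. A na\"ive approach tracking the total atomic mass $\gamma$ of $\nu_\infty$ produces a fixed point equation admitting both $\gamma = 0$ and $\gamma = 1$ as solutions in $[0,1]$, leaving open the pathological possibility of a purely atomic $\nu_\infty$ which would need to be excluded by a separate argument. Tracking instead the \emph{supremum} of the atomic mass under $T$ exploits the one-sided range of $f_{s,s'}$ to deliver a genuine $1/2$-contraction at the level of individual atoms and thus sidesteps the ambiguity.
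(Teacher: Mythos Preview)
Your proof is correct and follows essentially the same route as the paper: both pass to log-likelihood ratios, define the maximal conditional atom $M=\vartheta^\ast$, isolate one infinite-subtree summand, and use that the Rademacher sign $\vs$ forces its contribution to lie in $(0,\infty)$ or $(-\infty,0)$ to extract the factor $1/2$, then absorb the remaining independent sum via the elementary bound $\sup_z\pr(X+Y=z)\le\sup_z\pr(Y=z)$. Your packaging of the sign argument as a uniform operator inequality $\sup_v T(\mu)(\{v\})\le\tfrac12\sup_\theta\mu(\{\theta\})$ is a clean variant that also handles $v=0$ without invoking Remark~\ref{rem:boundary}, but the underlying mechanism is identical.
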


To prove Lemma \ref{lem:inf_trees_noatoms}, it will, once more, be more convenient to work with log-likelihood ratios. To this end, recall the random variables $\vec \vartheta_o, (\vec\vartheta_{o_j})_{j \geq 1}$ from Section~\ref{Sec:Conv_LL} as well as the almost sure relation \eqref{eqProp_asconvergence3}. We then consider the maximal point mass 
\[\vartheta^{\ast} = \sup_{z \in \RR} \pr\bc{\vec \vartheta_o = z \vert |\vT| = \infty} \in [0,1]
\]
of $\vec \vartheta_o$ conditionally on having sampled an infinite tree. Observe that the supremum is attained in some value $z^{\ast} \in \RR$, i.e. $\pr\bc{\vec \vartheta_o = z^{\ast} \vert |\vT| = \infty} = \vartheta^{\ast}$. 
We further define  
\begin{align}
    \mathcal T_{o, \mathrm{fin}} = \cbc{i \in [\vd_o]: |\vT_i|<\infty}, \quad  \mathcal T_{o, \mathrm{inf}} = \cbc{i \in [\vd_o]: |\vT_i|=\infty}.
\end{align}
We may then decompose the sum over subtrees \eqref{eqProp_asconvergence3} into a contribution from the infinite and the finite subtrees: 
\begin{align}\label{eq:lldecom_inf}
     \vec\vartheta_o = -\sum_{i\in \mathcal T_{o, \mathrm{fin}}}\vec s_i\log\frac{1+\vec s_i'\tanh(\vec\vartheta_{o_i}/2)}{2} - \sum_{i\in \mathcal T_{o, \mathrm{inf}}}\vec s_i\log\frac{1+\vec s_i'\tanh(\vec\vartheta_{o_i}/2)}{2} =: \efin + \einf. 
\end{align}
This decomposition is well-defined as all single subtree contributions are finite almost surely. 
The first observation underlying the proof of Lemma~\ref{lem:inf_trees_noatoms} is an application of the following fact to the decomposition \eqref{eq:lldecom_inf}.
 \begin{lemma}\label{coro-sum-atom}
    Let $\vX$ and $\vY$ be two independent, real-valued random variables. 
    Then 
    \begin{align}\label{eq-sum-atom}
       \sup_{z \in \RR} \pr(\vX+\vY=z) \leq \sup_{z \in \RR} \pr(\vY=z).
    \end{align}
 \end{lemma}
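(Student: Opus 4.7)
The plan is to prove the inequality by conditioning on $\vX$ and exploiting independence, which reduces the left-hand side to an average of quantities each of which is pointwise bounded by the right-hand side.

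More precisely, fix an arbitrary $z \in \RR$. By the independence of $\vX$ and $\vY$, conditioning on $\vX$ and applying the tower property yields
\begin{align*}
\pr(\vX + \vY = z) = \Erw\brk{\pr(\vY = z - \vX \mid \vX)} = \int_{\RR} \pr(\vY = z - x) \, \dd \mu_{\vX}(x),
\end{align*}
where $\mu_{\vX}$ denotes the law of $\vX$. For every $x \in \RR$, the integrand satisfies $\pr(\vY = z - x) \leq \sup_{w \in \RR}\pr(\vY = w)$, so integrating against the probability measure $\mu_{\vX}$ preserves this bound:
\begin{align*}
\pr(\vX + \vY = z) \leq \sup_{w \in \RR}\pr(\vY = w).
\end{align*}
As the right-hand side does not depend on $z$, taking the supremum over $z \in \RR$ on the left-hand side yields the claim.

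There is no real obstacle here; the only subtlety worth a sentence is measurability, which is immediate since the map $x \mapsto \pr(\vY = z - x)$ is a difference of two distribution functions of $\vY$ (namely $F_{\vY}(z-x) - F_{\vY}((z-x)^{-})$), hence Borel measurable, so Fubini/the tower property applies without issue.
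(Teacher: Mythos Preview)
Your proof is correct and follows the same approach as the paper: condition on $\vX$, use independence to write $\pr(\vX+\vY=z)=\int\pr(\vY=z-x)\,\dd\mu_{\vX}(x)$, bound the integrand uniformly by $\sup_{w}\pr(\vY=w)$, and then take the supremum over $z$. The only addition is your brief measurability remark, which is fine but not needed for the paper's argument.
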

 \begin{proof}
Let $\pr_{\vX}$ denote the distribution of $\vX$. Then for any $z \in \RR$,
\begin{align}\label{eq:sum-atoms}
    \pr\bc{\vX+\vY=z} =\int \pr\bc{\vY=z-x} \,d\pr_{\vX}(x)
     \leq \sup_{z' \in \RR} \pr(\vY=z') \int 1 \,d\pr_{\vX}(x) = \sup_{z' \in \RR} \pr(\vY=z').
     \end{align}
 \end{proof}

An application of Lemma~\ref{coro-sum-atom} to the decomposition $\vec\vartheta_o = \efin + \einf$ from \eqref{eq:lldecom_inf} conditionally on the event $\{|\vT|= \infty\}$, using conditional independence of $\efin$ and $\einf$ given $\{|\vT|= \infty\}$, yields the bound
 \begin{align}\label{eq:inf_first_bound}
\vartheta^{\ast} =  \sup_{z \in \RR} \pr(\vec \vartheta_o =z \vert |\vT| = \infty) \leq  \sup_{z \in \RR} \pr(\einf =z \vert |\vT| = \infty).
\end{align}
The next lemma establishes a reverse bound from which it will be immediate that $\vartheta^{\ast}=0$.
\begin{lemma}\label{lem-atom-GAMMA'-no-more-ETA}
For $d \in (1,2)$,
\begin{align*}
  \sup_{z \in \RR}  \pr\bc{\einf=z \mid |\vT| = \infty}\leq \vartheta^{\ast}/2.
\end{align*}
\end{lemma}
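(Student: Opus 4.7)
The plan is to isolate the contribution of one infinite child-subtree of the root and exploit the ``smoothing'' provided by the two Rademacher signs attached to its connecting clause. On $\{|\vT| = \infty\}$ the set $\mathcal{T}_{o,\mathrm{inf}}$ is nonempty, so I would set $J := \min \mathcal{T}_{o,\mathrm{inf}}$ and decompose
\[
\einf \;=\; -\vec{s}_J \log\tfrac{1 + \vec{s}_J'\tanh(\vec{\vartheta}_{o_J}/2)}{2}\;-\!\!\sum_{i \in \mathcal{T}_{o,\mathrm{inf}} \setminus \{J\}}\!\!\vec{s}_i \log\tfrac{1 + \vec{s}_i'\tanh(\vec{\vartheta}_{o_i}/2)}{2} \;=:\; \vY + \vX.
\]
Let $\mathcal{G}$ be the $\sigma$-algebra generated by $\vd_o$ and $\mathcal{T}_{o,\mathrm{inf}}$, so that $J$ is $\mathcal{G}$-measurable. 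Given $\mathcal{G}$, the variables $\vX$ and $\vY$ depend on disjoint sub-trees together with disjoint Rademacher pairs and are therefore conditionally independent, so the conditional version of Lemma~\ref{coro-sum-atom} yields
\[
\sup_{z \in \RR}\Pr(\einf = z \mid \mathcal{G},\,|\vT| = \infty)\;\leq\;\sup_{z \in \RR}\Pr(\vY = z \mid \mathcal{G},\,|\vT| = \infty).
\]

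Next I would bound the right-hand side by $\vartheta^{\ast}/2$ via a sign-parity argument. Since $(1 + s'\tanh(\theta/2))/2 \in (0,1)$ for every $s' \in \{\pm 1\}$ and every $\theta \in \RR$, the logarithm appearing in $\vY$ is strictly negative, so $\vY$ has the same sign as $\vec{s}_J$ almost surely (in particular $\vY \neq 0$ a.s., as $\vec{\vartheta}_{o_J}\in\cL^1$ is finite a.s.). Hence for $z > 0$ the event $\{\vY = z\}$ forces $\vec{s}_J = +1$, and by strict monotonicity of $\theta \mapsto \log((1 + s'\tanh(\theta/2))/2)$ there is at most one preimage $\theta_{s'}\in\RR$ for each $s' \in \{\pm 1\}$. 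Conditionally on $\mathcal{G}$ and $|\vT|=\infty$ the subtree $\vT_J$ is an independent copy of $\vT$ conditioned to be infinite, so $\vec{\vartheta}_{o_J}$ is independent of $(\vec{s}_J, \vec{s}_J')$ with the same conditional distribution as $\vec{\vartheta}_o$ given $|\vT|=\infty$; this gives
\[
\Pr(\vY = z \mid \mathcal{G},\,|\vT|=\infty) \;\leq\; \tfrac14\Pr(\vec{\vartheta}_{o_J} = \theta_{+1}\mid\mathcal{G},\,|\vT|=\infty) + \tfrac14\Pr(\vec{\vartheta}_{o_J} = \theta_{-1}\mid\mathcal{G},\,|\vT|=\infty)\;\leq\;\tfrac{\vartheta^{\ast}}{2}.
\]
The case $z < 0$ is symmetric and $z = 0$ is trivial by the sign argument. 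Averaging this $\mathcal{G}$-conditional bound over $\mathcal{G}$ then transports the inequality to the unconditional statement claimed in the lemma.

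The main obstacle I anticipate is the careful bookkeeping of conditional distributions: verifying cleanly that conditioning on $\mathcal{G}$ preserves conditional independence of $\vY$ and $\vX$, and that the conditional law of $\vec{\vartheta}_{o_J}$ given $\mathcal{G}$ agrees with that of $\vec{\vartheta}_o$ given $\{|\vT|=\infty\}$ (so that the maximal atom is exactly $\vartheta^{\ast}$). Once these measurability and independence points are in place, the sign/monotonicity argument responsible for the gain of the factor $1/2$ is elementary.
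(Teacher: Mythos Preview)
Your proposal is correct and follows essentially the same route as the paper's proof: the paper conditions on the atoms of your $\sigma$-algebra $\mathcal G$ (i.e.\ on $\{\vd_o=t\}$ together with the index set $I$ of infinite subtrees), picks out the first infinite subtree $i_\ast=\min I$, and uses the sign $\vs_{i_\ast}$ to gain the factor $1/2$ in exactly the way you describe, before applying Lemma~\ref{coro-sum-atom} to the remaining infinite summands. Your presentation is slightly more explicit in unfolding the role of $\vs_J'$ (the two preimages $\theta_{\pm1}$), whereas the paper leaves $\vs_{i_\ast}'$ random and bounds $\pr(\vec\vartheta_{o_{i_\ast}}=\vs_{i_\ast}'\varphi(\eul^{-z})\mid|\vT_{i_\ast}|=\infty)\leq\vartheta^\ast$ directly; the arithmetic is identical.
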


\begin{proof}[Proof of \Cref{lem-atom-GAMMA'-no-more-ETA}] 
Fix $t \in \NN$ as well as $\emptyset \not= I \subseteq [t]$.  By the law of total probability, it is sufficient to show that for any such choice of $t$ and $I$, for all $z \in \RR,$ 
\begin{align}\label{eq:inf_subtree_count}
     \pr\bc{\einf=z \mid \vec \vd_o = t, \forall i \in I:  |\vT_i|=\infty, \forall j \in [t]\setminus I: |\vT_i|<\infty} \leq \vartheta^{\ast}/2.
\end{align}
To this end, let $i_\ast = \min I$ be the `first' infinite subtree. As a first step, we show that 
\begin{align}\label{eq:single_inf}
    \pr\bc{-\vs_{i_\ast} \log\frac{1+\vs_{i_\ast}' \tanh(\vec\vartheta_{o_{i_\ast}}/2)}{2} = z \Big \vert \vec \vd_o = t, \forall i \in I:  |\vT_i|=\infty, \forall j \in [t]\setminus I: |\vT_i|<\infty } \leq \vartheta^\ast/2.
\end{align}
For now, assume that $z>0$. Given the number $t$ of subtrees of the root $o$ as well as the indices $I$ of the infinite subtrees, the subtrees are independent. Recalling the mutually inverse functions $\psi, \varphi$ defined in \eqref{eqll} and that $z>0$, the left hand side of \eqref{eq:single_inf} becomes
\begin{align}\label{eq:cond_si}
    & \pr\bc{-\vs_{i_\ast} \log\bc{\psi(\vs_{i_\ast}'\vec\vartheta_{o_{i_\ast}})} = z \big \vert \vec \vd_o = t, \forall i \in I:  |\vT_i|=\infty, \forall j \in [t]\setminus I: |\vT_i|<\infty } \nonumber \\
    & \qquad =\pr\bc{\vs_{i_\ast} =1} \pr\bc{\vec\vartheta_{o_{i_\ast}} = \vs_{i_\ast}' \varphi\bc{\eul^{-z}}\big \vert |\vT_{i_\ast}|=\infty} = \frac{1}{2} \pr\bc{\vec\vartheta_{o_{i_\ast}} = \vs_{i_\ast}' \varphi\bc{\eul^{-z}}\big \vert |\vT_{i_\ast}|=\infty},
\end{align}
where we have conditioned on $\{\vs_{i_\ast} =1\}$ for $z>0$, as $ \log(\psi(\vs_{i_\ast}'\vec\vartheta_{o_{i_\ast}})) < 0$ almost surely.
On the other hand, the distribution of $\vec \vartheta_{o_{i_\ast}}$ given $\{|\vT_{i_\ast}|=\infty\}$ agrees with the distribution of $\vec\vartheta_o$ given $\{|\vT|=\infty\}$. Thus, we obtain \eqref{eq:single_inf} for $z>0$. Of course, the case $z<0$ can be treated analogously by conditioning on $\vs_{i_\ast}=-1$ in \eqref{eq:cond_si}. Finally, in the case $z=0$, the first conditional probability in \eqref{eq:cond_si} is $0$, since given $\{|\vT_{i_\ast}|=\infty$\}, $\vec\vartheta_{o_{i_\ast}}<\infty$ almost surely. Otherwise, as $\pr(|\vT|=\infty)>0$ for $d>1$, and using again that the distribution of $\vec \vartheta_{o_{i_\ast}}$ given $\{|\vT_{i_\ast}|=\infty\}$ agrees with the distribution of $\vec\vartheta_o$ given $\{|\vT|=\infty\}$, $\pi_d$ would have an atom in $1$, which is ruled out by Remark~\ref{rem:boundary}. Therefore,  $-\vs_{i_\ast} \log(\psi(\vs_{i_\ast}'\vec\vartheta_{o_{i_\ast}})) \neq 0$, so that \eqref{eq:single_inf} indeed holds for all $z \in \RR$.

The claim now follows from another application of Lemma~\ref{coro-sum-atom}: To this end, given the number $t$ of subtrees of the root $o$ as well as the indices $I$ of the infinite subtrees, we decompose $\einf$ further as
    \[\einf =-\vec s_{i_\ast}\log\frac{1+\vec s_{i_\ast}'\tanh(\vec\vartheta_{o_{i_\ast}}/2)}{2} + \sum_{i \in I\setminus\{i_\ast\}} (-\vec s_i)\log\frac{1+\vec s_i'\tanh(\vec\vartheta_{o_i}/2)}{2} =: \einf^{(1)} + \einf^{(2)}.\]
Using conditional independence of the subtrees and \eqref{eq:single_inf}, we thus obtain from Lemma~\ref{coro-sum-atom} that
\begin{align*}
  \pr\bc{  \einf^{(1)} + \einf^{(2)}\mid \vec \vd_o = t, \forall i \in I:  |\vT_i|=\infty, \forall j \in [t]\setminus I: |\vT_i|<\infty}   \leq \vartheta^{\ast}/2,
\end{align*}
and therefore \eqref{eq:inf_subtree_count}.
\end{proof}

\begin{proof}[Proof of Lemma \ref{lem:inf_trees_noatoms}]
It follows directly from the combination of \eqref{eq:inf_first_bound} and \Cref{lem-atom-GAMMA'-no-more-ETA} that
\begin{align*}
0 \leq \vartheta^{\ast} =  \sup_{z \in \RR} \pr(\vec \vartheta_o =z \vert |\vT| = \infty)  \leq \sup_{z \in \RR} \pr(\einf =z \vert |\vT| = \infty)\leq \vartheta^{\ast}/2,
\end{align*}
which can hold only if $\vartheta^{\ast}=0$. This means that the law of $\vec \vartheta_o$ conditional on $\{|\vT|=\infty\}$ has no atoms. As $\MU_o = \psi(\vec \vartheta_o)$ and $\psi$ is injective, the claim follows.
\end{proof}

\subsection{Infinite tree support}\label{Sec:Supp_inf}
It follows from Proposition~\ref{prop:marginals_of_trees} and Lemma~\ref{lem:inf_trees_noatoms} that the restriction of the distribution of $\MU_o$ to the event $\{|\vT|< \infty\}$ is purely discrete, while its restriction to $\{|\vT|= \infty\}$ is purely continuous. 
\Cref{prop:marginals_of_trees} shows that the support of the discrete measure $A \mapsto \pr(\MU_o \in A, |\vT| < \infty)$ is $\mathbb Q \cap (0,1)$. We next identify the support of the continuous measure $A \mapsto \pr(\MU_o \in A, |\vT| = \infty)$. 

\begin{lemma}\label{lem:inf_tree_supp}
For $d \in (1,2)$, $\pi_d$ has a non-trivial continuous part $\pi_{d,\mathrm{c}}$ with $\supp(\pi_{d,\mathrm{c}}) = [0,1]$.
\end{lemma}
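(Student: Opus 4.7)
The proof of this lemma splits into establishing non-triviality of the continuous part and identifying its support, both using the decomposition $\pi_{d,\mathrm{c}}(A) = \pr(\MU_o \in A, |\vT|=\infty)$ and $\pi_{d,\mathrm{d}}(A) = \pr(\MU_o \in A, |\vT|<\infty)$. Non-triviality is immediate: for $d \in (1,2)$, $\pi_{d,\mathrm{c}}([0,1]) = \pr(|\vT|=\infty) > 0$ and $\pi_{d,\mathrm{c}}$ is atom-free by Lemma~\ref{lem:inf_trees_noatoms}, while $\pi_{d,\mathrm{d}}$ is purely discrete because $\MU_o$ is rational on $\{|\vT|<\infty\}$, so $\pi_{d,\mathrm{c}}$ really is the continuous part of $\pi_d$.

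The heart of the argument for the support claim is the following invariance property for $S := \supp(\pi_{d,\mathrm{c}})$: for any $x \in S$ and any $q \in \mathbb{Q} \cap (0,1)$, the value $\Phi_q(x) := q/(q+x)$ again lies in $S$. The plan is to construct a positive-probability event on which $|\vT|=\infty$ and the root marginal is as close as desired to $\Phi_q(x)$. Concretely, using Proposition~\ref{prop:marginals_of_trees}, fix a finite rooted factor tree $T_q$ with $\mu_{T_q}(\SIGMA_{o_q}=1) = q$, and for $\eps' > 0$ consider the event that $\vd_o = 2$, $\vT_1 = T_q$ with clause signs $(\vs_1, \vs_1')=(-1, +1)$, and $\vT_2$ is infinite with $\MU_{o_2} \in B_{\eps'}(x)$ through a clause of signs $(\vs_2,\vs_2')=(+1, +1)$. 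All components are independent and each has positive probability — the last one thanks to $x \in S$ together with the fact that $\MU_{o_2}$ shares the distribution of $\MU_o$ by Proposition~\ref{Prop_asconvergence}. On this event, the recursion~\eqref{eqProp_asconvergence2} collapses to $\MU_o = q/(q+\MU_{o_2})$, and the elementary sensitivity bound $|\Phi_q(x) - \Phi_q(x')| \leq |x-x'|/q$ shows $\MU_o$ is within $\eps'/q$ of $\Phi_q(x)$. Choosing $\eps' = q\eps/2$ yields $\Phi_q(x) \in S$.

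The support identification $S = [0,1]$ then follows by iterating the invariance. Since $S$ is nonempty (by non-triviality) and closed, pick any $x_0 \in S$; as $q \to 0^+$ through rationals, $\Phi_q(x_0) \to 0$, forcing $0 \in S$. Applying invariance at $x = 0$ yields $\Phi_q(0) = 1 \in S$. Applying it at $x = 1$, the image $\{q/(q+1) : q \in \mathbb{Q} \cap (0,1)\}$ sits in $S$ and is dense in $(0, 1/2)$, so $[0, 1/2] \subseteq S$ by closedness. Finally, for arbitrary $y \in (0,1)$, choose a rational $q$ with $0 < q \leq y/(2(1-y))$, set $x := q(1-y)/y \in (0, 1/2] \subseteq S$, and observe $\Phi_q(x) = y$. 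Combined with $\{0, 1\} \subseteq S$, this gives $S = [0,1]$.

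The main obstacle I expect is the invariance step: one must pin down an event of positive probability on which the root marginal lies near the prescribed value $\Phi_q(x)$, and this relies on Proposition~\ref{Prop_asconvergence}(iii) to identify the law of $\MU_{o_2}$ given $\{|\vT_2|=\infty\}$ together with the uniform sensitivity bound $|\Phi_q'| \leq 1/q$. This is precisely the mechanism that prevents circularity, since it avoids presupposing any structural information on $S$ beyond the single point $x$.
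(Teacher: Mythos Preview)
Your proof is correct and shares the paper's core device---conditioning on $\vd_o=2$ with one finite subtree (drawing on Proposition~\ref{prop:marginals_of_trees}) and one infinite subtree (so that $|\vT|=\infty$)---but the execution differs. The paper works in log-likelihood space, where the two subtree contributions add: it fixes an arbitrary interval $[a,b]\subset\RR$, picks a support point $c$ of the infinite-subtree term $\einf$, notes that the finite-subtree term $\efin$ has dense support in $\RR$ (via Proposition~\ref{prop:marginals_of_trees} together with the random signs), and concludes in one step that $\vec\vartheta_o=\efin+\einf$ lands in $[a,b]$ with positive conditional probability. You stay in marginal space, establish the invariance $S\ni x\Rightarrow q/(q+x)\in S$ directly from the recursion~\eqref{eqProp_asconvergence2}, and then iterate to fill out $[0,1]$. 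Both arguments are valid; the additive packaging in log-likelihood coordinates buys a one-shot conclusion without iteration, while your route is more explicit about which tree configurations realise each target value. One small wrinkle: the line ``pick any $x_0\in S$'' should read ``pick $x_0\in S\cap(0,1]$'' (nonempty since $\pi_{d,\mathrm{c}}$ is non-trivial and atom-free, hence cannot be concentrated on $\{0\}$), so that $\Phi_q(x_0)\to 0$ as $q\to 0^+$ is actually valid.
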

\begin{proof}
As discussed at the beginning of this subsection, the continuous part $\pi_{d,\mathrm{c}}$ of $\pi_d$ is given by the restriction of the distribution of $\MU_o$ to the event $\{|\vT|= \infty\}$. In particular, as $|\vT|=\infty$ with strictly positive probability for $d \in (1,2)$, $\pi_{d,\mathrm{c}} \not\equiv 0$.

To establish that $\supp(\pi_{d,\mathrm{c}}) = [0,1]$, we will once more analyse the distribution of $\vec \vartheta_o$ given $\{|\vT|=\infty\}$ and show that its support is the whole real line $\RR$. To this end, we will again condition on a specific tree structure, which is sufficient for our purposes.

Fix $a, b \in \RR$ with $a<b$; we will then argue that $\Pr\bc{\vec\vartheta_o\in [a,b]\vert \vd_o=2, |\vT_1|=\infty, |\vT_2| < \infty}>0$.
To do so, recall the decomposition $\vec\vartheta_o = \efin + \einf$ of the log-likelihood ratio into a contribution from the finite and the infinite subtrees of the root $o$ from \eqref{eq:lldecom_inf}. As mentioned previously, $\Pr\bc{\efin\in \{\pm \infty\} \vert \vd_o=2, |\vT_1|=\infty, |\vT_2| < \infty}=0$ as a consequence of conditional independence and Proposition~\ref{prop:marginals_of_trees}. Similarly, $\Pr\bc{\einf\in \{\pm \infty\} \vert \vd_o=2, |\vT_1|=\infty, |\vT_2| < \infty}=0$, as otherwise, using that $\efin \in \RR$ almost surely, we could write $\vec \vartheta_o = \efin + \einf$ and it would follow that $\pr(\vec\vartheta_o \in \{\pm \infty\})>0$. This however contradicts Remark~\ref{rem:boundary}. 
As a consequence, there exists $c \in \RR$ such that
\begin{align*}
    \Pr\bc{\einf\in \brk{c - \frac{b-a}{4},c+\frac{b-a}{4}}\big\vert \vd_o=2, |\vT_1|=\infty, |\vT_2| < \infty}>0.
\end{align*}
On the other hand, given $\{ \vd_o=2, |\vT_1|=\infty, |\vT_2| < \infty\}$, $\vec\vartheta_{o_2}$ has the same distribution as $\vec\vartheta_o$ given $\{|\vT|<\infty\}$. As $\MU_o = \psi(\vec \vartheta_o)$, depending on the values of $\vs_2'$, $\efin = -\vs_2 \log \MU_{o_2}$ or  $\efin = -\vs_2 \log(1 -\MU_{o_2})$. \Cref{prop:marginals_of_trees} thus yields that $\efin$ given $\{ \vd_o=2, |\vT_1|=\infty, |\vT_2| < \infty\}$ is a discrete random variable with countable pure point support that is dense in $\RR$. Specifically, we have that
 \begin{align*}
    \Pr\bc{\efin\in \brk{\frac{3a+b}{4}-c,\frac{3b+a}{4}-c} \big\vert \vd_o=2, |\vT_1|=\infty, |\vT_2| < \infty }>0.
\end{align*}
Consequently,  abbreviating $\cE:=\{\vd_o=2, |\vT_1|=\infty, |\vT_2| < \infty \}$, by the independence of $\efin$  and $\einf$ given $\cE$,
\begin{align*}
    \Pr\bc{\vec\vartheta_o\in [a,b]\vert \cE}
    \geq   
    \Pr\bc{\einf\in \brk{c - \frac{b-a}{4},c+\frac{b-a}{4}} \big \vert \cE }
      \cdot \Pr\bc{\efin\in \brk{\frac{3a+b}{4}-c,\frac{3b+a}{4}-c} \big\vert \cE }>0.
\end{align*}
It follows that the support of the distribution of $\vec\vartheta_o$ given $\{|\vT|=\infty\}$ is $\RR$. The desired result then follows from $\MU_o = \psi(\vec \vartheta_o)$ in combination with the continuity and surjectivity of $\psi$.
\end{proof}

\subsection{Proof of Proposition \ref{lem_atoms2}}\label{Sec:Proof_atoms2}
Let $x \in (0,1)$. According to Proposition \ref{Prop_asconvergence} iii), we have
\begin{align}\label{eq:atom_decomp}
    \pi_d(\{x\}) = \Pr\bc{\MU_o=x} = \pr\bc{\MU_o=x \vert |\vT| = \infty} \pr\bc{|\vT|= \infty} + \pr\bc{\MU_o = x \vert |\vT|<\infty}\pr\bc{|\vT|<\infty},
\end{align}
where we set $ \pr\bc{\MU_o=x \vert |\vT| = \infty} \pr\bc{|\vT|= \infty} = 0$ for $d \in (0,1]$. 
However, Lemma \ref{lem:inf_trees_noatoms} implies that in fact for any $d \in (0,2)$, \eqref{eq:atom_decomp} reduces to
\begin{align}
    \pi_d(\{x\}) = \pr\bc{\MU_o = x \vert |\vT|<\infty}\pr\bc{|\vT|<\infty}.
\end{align}
Now by Proposition~\ref{prop:marginals_of_trees}, $\pr\bc{\MU_o = x \vert |\vT|<\infty}$ can only be positive for $x \in \mathbb{Q}\cap(0,1)$, from which it follows that $\mathbb{A}_{\mathrm{p.p.}} \subseteq \mathbb{Q} \cap (0,1)$. 
Furthermore, for $d\in (0,1]$, $|\vT|<\infty$ almost surely. Hence, $\pi_d$ is a discrete measure for $d\in (0,1]$.
Finally, for $d \in (1,2)$, the existence of a non-trivial continuous part with support $[0,1]$ is proven in Lemma~\ref{lem:inf_tree_supp}.

\end{document}